\numberwithin{equation}{section}
\theoremstyle{plain}
\newtheorem{Corollary}[equation]{Corollary}
\newtheorem*{Corollary*}{Corollary}
\newtheorem{Theorem}[equation]{Theorem}
\newtheorem*{Theorem*}{Theorem}
\newtheorem{Lemma}[equation]{Lemma}
\theoremstyle{definition}
\newtheorem{Example}[equation]{Example}
\newtheorem{Remark}[equation]{Remark}
\setlist[enumerate,1]{label=(\alph*),font=\upshape}
\setlist[enumerate,2]{label=(\roman*),font=\upshape}
\def\HH{\mathscr{H}}
\def\Mult{\mathfrak{M}}
\def\P{\mathscr{P}}
\def\M{\mathbf{M}}
\def\B{\mathbf{B}}
\def\C{\mathbb{C}}
\def\D{\mathbb{D}}
\def\T{\mathbb{T}}
\def\N{\mathbb{N}}
\def\phi{\varphi}
\newcommand{\beqa}{\begin{eqnarray*}}
\newcommand{\eeqa}{\end{eqnarray*}}
\newcommand{\kk}{\mathbf{k}}
\renewcommand{\leq}{\leqslant}
\renewcommand{\le}{\leqslant}
\renewcommand{\subset}{\subseteq}
\newcommand{\DD}{\mathcal{D}}
\title[B\'{e}zout's equation]{Sharp estimates of the solutions to B\'{e}zout's polynomial equation and a corona theorem}
\author[Fricain]{Emmanuel Fricain}
 \address{Laboratoire Paul Painlev\'e, Universit\'e de Lille, 59 655 Villeneuve d'Ascq C\'edex }
 \email{emmanuel.fricain@univ-lille.fr}
\author[Hartmann]{Andreas Hartmann}
\address{Univ. Bordeaux, CNRS, Bordeaux INP, IMB, UMR 5251, F-33400, Talence, France}
\email{Andreas.Hartmann@math.u-bordeaux.fr}
\author[Ross]{William T. Ross}
	\address{Department of Mathematics and Statistics, University of Richmond, Richmond, VA 23173, USA}
	\email{wross@richmond.edu}
		\author[Timotin]{Dan Timotin}
	\address{Simion Stoilow Institute of Mathematics of the Romanian Academy, PO Box 1-764, Bucharest 014700, Romania}
	\email{Dan.Timotin@imar.ro}
\keywords{B\'{e}zout's equation, de Branges--Rovnyak spaces, corona theorem}
\thanks{The first author was supported by Labex CEMPI (ANR-11-LABX-0007-01) and the project FRONT (ANR-17-CE40 - 0021). The second author was supported by the project REPKA (ANR-18-CE40-0035)}
\subjclass[2010]{30J05, 30H10, 46E22}
\begin{document}

\begin{abstract}
In this paper, we obtain estimates for the solutions to the classical B\'{e}zout equation that are analogous to Carleson's solution to the corona theorem for the bounded analytic functions on the open unit disk. As an application,  we extend some results of Luo and  obtain a corona theorem for the multipliers of a class of de Branges--Rovnyak spaces. 
\end{abstract}

\maketitle

\section{Introduction}

A well-known theorem of \'{E}tienne B\'{e}zout (1730-1783) says that if $A, B$ belong to $\C[z]$ and have no common roots, then there are $R, S \in \C[z]$ with 
\begin{equation}\label{degcond}
 \deg R\le \deg B-1 \quad \mbox{and} \quad \deg S\le \deg A-1,
 \end{equation} such that 
\begin{equation}\label{Bezout_C}
A(z) R(z) + B(z) S(z) = 1 \quad \mbox{for all $z \in \C$.}
\end{equation}
Moreover, if $ A$ and  $B$ are not both constant polynomials, then $ R$ and $S $ are uniquely determined by the degree condition in \eqref{degcond} and will be called the \emph{minimal solutions} of~\eqref{Bezout_C}.

The proof of B\'{e}zout's theorem often presented to algebra students uses a version of the classical Euclidean algorithm for polynomials (noting of course that the lack of common roots for $A$ and $B$ implies that the greatest common divisor of $A$ and $B$ is the constant polynomial one). A result of Sylvester (see Theorem \ref{le:the real Bezout} below) gives an explicit formula for the minimal solutions $R$ and $S$ in terms of a certain matrix equation involving the {\em Sylvester resultant}. 

Inspired by Carleson's corona theorem (see the discussion below), one central goal of this paper is to estimate  the coefficients of the polynomials $ S $ and $ R $. To understand what we mean by this, consider the following example. Throughout this paper, we will use 
	\begin{equation}\label{normofaplyiinao}
	\|p\| := \max_{0 \leq j \leq n} |p_j|
	\end{equation}
	for the norm of a polynomial $p(z) = \sum_{j = 0}^{n} p_j z^j$.

\begin{Example}\label{ex:elementary}
		If $n \in \N$ and $0 < \delta < 1$, set 
	$A(z) = z^n$ and $B(z) = z - \delta$. One can work out the corresponding minimal polynomials $R$ and $S$ that satisfy \eqref{degcond} and \eqref{Bezout_C} to be 
	$$R(z) = \frac{1}{\delta^n} \quad \mbox{and} \quad S(z) = - \frac{1}{\delta^n} \sum_{j = 0}^{n - 1} \delta^{n - 1 - j} z^{j}.$$
	Then $\|A\|  = \|B\| = 1$, $|B(0)| = \delta$, and 
	$$\|R\|  = \|S\| = \frac{1}{\delta^n}.$$
  Notice how $n$ is the maximal order of the zeros of $A$, which in this case is a single zero at the origin of order $n$,  while the condition $|B(0)| = \delta$ can be interpreted as a lower bound for $B$ at the zero of $A$.	
	
\end{Example}

Our main theorem proves that the type of phenomenon presented in Example \ref{ex:elementary} always occurs. To state this result,  let $\C[z]$ denote  the vector space of polynomials in the complex variable $z$ with coefficients in $\C$, and for $N \in \N$, let $\P_{N}$ be the finite dimensional subspace
$\P_{N} = \{p \in \C[z]: \deg p \leq N\}$.

\begin{Theorem}\label{th:main estimate}
Let $A \in \P_{N}$ be of the form 
\[
A(z)=\prod_{j=1}^{n}(z-\alpha_j)^{m_j}, 
\]
where $\alpha_1, \alpha_2, \ldots, \alpha_n \in \C$ are distinct and  
$ N=\sum_{j=1}^{n}m_j\geqslant 1$. Fix $ K\in\N $.
There is a $C > 0 $, depending only on $ A$ and $ K $, such that if  $ B \in \P_{K} $ satisfies
	\begin{enumerate} 
		\item $ \| B \|\le 1$, and 
		\item $ |B(\alpha_j)|\geqslant\delta > 0 $ for all $1 \leq j \leq n$,
			\end{enumerate}
	 then the minimal solutions $R$ and $S$ of the  B\'{e}zout equation \eqref{Bezout_C} satisfy 
	 \begin{equation}\label{eq:estimate}
	 (\|R\|^2 + \|S\|^2)^{\frac{1}{2}} \leq \frac{C}{\delta^{\max m_j}}.
	 \end{equation}
	\end{Theorem}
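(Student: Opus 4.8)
The plan is to bypass the Sylvester resultant formula entirely and instead produce the minimal solution directly by Hermite interpolation, so that $\delta$ enters only through finitely many Taylor coefficients of $1/B$ at the points $\alpha_j$. Since $|B(\alpha_j)|\ge\delta>0$, the polynomial $B$ is invertible modulo each $(z-\alpha_j)^{m_j}$, and $A\mid(BS-1)$ precisely when $S$ agrees with $1/B$ together with its first $m_j-1$ derivatives at $\alpha_j$, for every $j$. There is exactly one such $S$ in $\P_{N-1}$ — the Hermite interpolant of $1/B$ at the nodes $\alpha_j$ with multiplicities $m_j$ (the confluent interpolation matrix being invertible because the $\alpha_j$ are distinct) — and putting $R:=(1-BS)/A$ one checks directly that $\deg R\le\deg B-1$; since $A$ is nonconstant, the uniqueness of minimal solutions shows $(R,S)$ is the minimal solution. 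We may also assume $\delta$ is as small as convenient: $\delta$ is automatically bounded above by a constant depending only on $A$ and $K$, and on the compact set $\{B\in\P_K:\|B\|\le1,\ |B(\alpha_j)|\ge\delta_0\}$ the continuous map $B\mapsto(R,S)$ is bounded, so the range $\delta\ge\delta_0$ only affects $C$.

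The heart of the matter is to estimate $g^{(k)}(\alpha_j)$ for $g:=1/B$ and $0\le k\le m_j-1$. Differentiating the identity $gB\equiv1$ exactly $k$ times at $\alpha_j$ and isolating the top-order term gives $g^{(k)}(\alpha_j)=-B(\alpha_j)^{-1}\sum_{i=0}^{k-1}\binom{k}{i}g^{(i)}(\alpha_j)B^{(k-i)}(\alpha_j)$. Because $\|B\|\le1$ and $\deg B\le K$, each $|B^{(l)}(\alpha_j)|$ is at most a constant depending only on $K$ and $\alpha_j$; feeding this and $|B(\alpha_j)|\ge\delta$ into the recursion, an induction on $k$ yields $|g^{(k)}(\alpha_j)|\le D_k\,\delta^{-(k+1)}$ with $D_k$ depending only on $A$ and $K$. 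Thus all the Hermite data at the node $\alpha_j$ are $O(\delta^{-m_j})$, which is exactly where the exponent $\max_j m_j$ will come from.

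Finally, let $h_{j,k}\in\P_{N-1}$ be the Hermite fundamental polynomials determined by the $\alpha_j$ and $m_j$ (hence depending on $A$ alone), so that $S=\sum_{j,k}g^{(k)}(\alpha_j)h_{j,k}$. Then $\|S\|\le\sum_{j,k}|g^{(k)}(\alpha_j)|\,\|h_{j,k}\|\le C_1\delta^{-\max_j m_j}$ by the previous paragraph together with $\delta\le1$. For $R$, note $\|BS\|\le(K+1)\|B\|\,\|S\|\le(K+1)\|S\|$, so $\|1-BS\|\le1+(K+1)\|S\|$; and since $Q\mapsto AQ$ is an injective linear map from $\P_{K-1}$ into $\P_{K+N-1}$, the inverse of this map on its (finite-dimensional) range is bounded by a constant $C_A$ depending only on $A$ and $K$. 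As $R\in\P_{K-1}$ and $AR=1-BS$ lies in that range, $\|R\|\le C_A(1+(K+1)\|S\|)\le C_2\delta^{-\max_j m_j}$, and adding the two bounds gives \eqref{eq:estimate}.

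The only real content is the local estimate of the second paragraph, together with the structural observation that the minimal solution depends on $B$ only through the Taylor polynomials of $1/B$ of degree $m_j-1$ at the $\alpha_j$ — it is this restriction to jets of order $<m_j$, rather than any global control of $1/B$, that produces $\delta^{-\max_j m_j}$ in place of the cruder $\delta^{-N}$ one would get straight from the resultant. All remaining assertions are instances of two soft facts: all norms on the fixed finite-dimensional spaces $\P_{N-1}$ and $\P_{K-1}$ are equivalent, and a continuous function on a compact set is bounded. The one genuinely computational point, and the place where the normalizations $\|B\|\le1$ and $\deg B\le K$ are used, is the a priori bound on $|B^{(l)}(\alpha_j)|$ and the bookkeeping of the constants $D_k$ through the recursion.
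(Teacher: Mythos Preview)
Your argument is correct, and for the bound on $S$ it is essentially the same as the paper's: both identify the minimal $S$ as the Hermite interpolant of $1/B$ at the $\alpha_j$ with multiplicities $m_j$, and both bound the Taylor data $g^{(k)}(\alpha_j)$ by the Leibniz recursion, which is exactly the content of Corollary~\ref{co:bezout by interpolation}(c) and the paragraph ``Estimating $S$'' in the paper.

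Where you genuinely diverge from the paper is in the treatment of $R$. The paper estimates $R$ by interpolating $1/A$ at the \emph{roots of $B$}; since those roots move with $B$, this forces a delicate induction on $\deg B$ (Lemmas~\ref{le:w_i large} and~\ref{le:|w_i-w_j|le rho}, the reduction to simple roots, and the splitting formula~\eqref{eq:int dif applied}), with separate case analyses according to whether the $\beta_i$ are large, clustered, or near some $\alpha_j$. You bypass all of this with the single observation that $R=(1-BS)/A$, that $R\in\P_{K-1}$, and that the linear map $Q\mapsto AQ$ from $\P_{K-1}$ into $\P_{K+N-1}$ has a bounded inverse on its range because the spaces are finite dimensional; hence $\|R\|\le C_A\|1-BS\|\le C_A\big(1+(K{+}1)\|S\|\big)$. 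This is shorter and more conceptual: the roots of $B$ never appear, so no continuity-in-the-roots argument, no simple-root reduction, and no induction are needed. What the paper's approach buys, in principle, is a more explicit handle on the constant (through the resultant and the geometric constants $\rho,\eta,c_j,M$), but for the statement as written your route is strictly simpler.

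Two minor remarks. First, your compactness reduction to $\delta\le\delta_0$ is fine because your construction of $(R,S)$ via Hermite interpolation of $1/B$ depends continuously on $B$ on the set $\{|B(\alpha_j)|\ge\delta_0\}$, regardless of $\deg B$; you do not need continuity of the Sylvester formula. Second, in the inductive bound $|g^{(k)}(\alpha_j)|\le D_k\,\delta^{-(k+1)}$ you implicitly use $\delta\le 1$ to absorb the lower-order terms $\delta^{-(i+1)}$ with $i<k-1$; this is covered by your reduction to small $\delta$, but it is worth saying explicitly.
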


	Theorem \ref{th:main estimate2} below extends this result to multiple polynomials $A$ and $B_1, \ldots, B_L$. 
	Moreover,  as shown in Example \ref{ex:elementary}, the estimate in \eqref{eq:estimate} is sharp in that the exponent $\max\{m_j: 1 \leq j \leq n\}$ on $\delta$ can not be lowered.  It seems  rather surprising to us  that such  estimates are lacking in the literature.

	The statement of Theorem~\ref{th:main estimate} is asymmetric in the polynomials $ A $ and $ B $: $ A $ is fixed (and determines the constant $ C $), while $ B $ is chosen freely in the finite dimensional space $\P_{K}$.  As we will see in Corollary~\ref{co:rough estimate} below, the formula for the solutions of the B\'ezout equation involving the Sylvester determinant, given in Section~\ref{se:sylvester}, can be used to obtain a more symmetric result for the coefficient estimates. However, the  estimate from the symmetric version can be much less precise in terms of the exponent on $ \delta $ than the asymmetric one presented in Theorem~\ref{th:main estimate}.

	
%
 
%
Theorem~\ref{th:main estimate} is connected to a related problem, known as the corona problem \cite{CP} for $H^{\infty}$, the space of  bounded analytic functions on the open unit disk $\D = \{z \in \C: |z| < 1\}$ endowed with the standard norm $\|f\|_{\infty} := \sup_{z \in \D} |f(z)|$. In a way, the corona theorem can be viewed as a B\'{e}zout theorem for $H^{\infty}$.   In 1962, Carleson \cite{MR141789} answered a conjecture proposed by Kakutani \cite{MR5778} in 1941 and proved that if $(f_{j})_{j = 1}^{N}$ is a finite sequence in $H^{\infty}$ which satisfies 
	$$0 < \delta \leq \Big(\sum_{j = 1}^{N} |f_j(z)|^2\Big)^{\frac{1}{2}} \leq 1 \quad \mbox{for all $z \in \D$},$$
	then there is a finite sequence $(g_j)_{j = 1}^{N}$ in $ H^{\infty}$ such that 
	$$\sum_{j = 1}^{N} f_{j}(z) g_{j}(z) = 1 \quad \mbox{for all $z \in \D$}$$
	and
	$$\Big(\sum_{j = 1}^{N} |g_j(z)|^2\Big)^{\frac{1}{2}} \leq C \quad \mbox{for all $z \in \D$}.$$
	In the above, the constant $C$ initially depends  only on $\delta$ and   $N$. Further investigations have shown that  $ C=C(\delta) $ can be chosen to only depend on $ \delta $. Various estimates of $C(\delta)$ were explored 
	in~\cite{hormander, MR629839, Uch}. The latter  paper of Uchiyama contains the sharpest known estimate 
	\begin{equation}\label{uchi}
	C(\delta) \leq C \frac{1}{\delta^2} \log \frac{1}{\delta},
	\end{equation}
	where $C$ is an absolute constant. On the other hand, Treil \cite{MR1945294} proved that one can do no better than 
	$$C \frac{1}{\delta^2} \log \log \frac{1}{\delta}.$$
	
	Carleson's corona theorem has been generalized in many directions. For example, Tolokonnikov \cite{MR629839} and Rosenblum \cite{MR570865} independently proved a version of Carleson's theorem for an {\em infinite} sequence $(f_{j})_{j = 1}^{\infty}$ of $H^{\infty}$ functions. There are also many generalizations of the corona theorem to matrix and operator-valued functions that are important to control theory and similarity problems. For example, Nikolski's book \cite[Ch.9, Sec.~2]{MR1864396} relates the operator-valued corona problem to one-sided invertibility of Toeplitz operators.

	 Another direction of inquiry  stems from the fact that $ H^\infty $ is the multiplier algebra for the Hardy--Hilbert space $ H^2 $ (see Section \ref{H2Hbbasics}) which inspires one to prove corona type theorems for sequences in the multiplier algebra of  other reproducing kernel Hilbert space  of analytic functions \cite{Luothesis, MR4363747, MR2057771}. 
	A particular class of such reproducing kernel Hilbert spaces, which has received quite a lot of attention over the past several decades, arises from  the de Branges--Rovnyak spaces $ \HH(b) $ \cite{Sa, FM1, FM2}. These spaces are an important class of linear submanifolds of the classical Hardy space $H^2$ that appear when modeling certain Hilbert space contractions.  In Theorem \ref{th:corona} we use our B\'{e}zout estimates to obtain a corona theorem for the multiplier algebras of de Branges--Rovnyak spaces $\mathscr{H}(b)$ in the case where $ b $ is any rational  function (but not a finite Blaschke product)   in the closed unit ball of $H^{\infty}$. Our results are related to those from \cite{Luothesis, MR4363747}.

\section{The classical B\'{e}zout theorem}\label{se:sylvester}

\subsection{B\'ezout's formula and the Sylvester matrix}
 
The next theorem gives a useful formula for the solution to the B\'{e}zout problem involving the Sylvester matrix (see \cite[p.~200]{MR1878556} or  \cite[p.~77]{MR2122859}).

\begin{Theorem}\label{le:the real Bezout}
If  
$$A(z) = \sum_{j = 0}^{N} A_{j} z^j \quad \mbox{and} \quad B(z) = \sum_{j = 0}^{K}  B_j z^j$$
are not both constant polynomials, and have no common zeros, then there are unique polynomials 
$$R(z) = \sum_{j = 0}^{K - 1} R_j z^j \quad \mbox{and} \quad S(z) = \sum_{j = 0}^{N - 1} S_j z^j $$ such that 
\begin{equation}\label{eq:first general Bezout}
RA+SB\equiv 1.
\end{equation}
The coefficients $ R_0,\dots, R_{K-1} $ and $ S_0, \dots, S_{N-1} $ are solutions to the system 
\begin{equation}\label{eq:system for coefficients}
\mathfrak{S} \bf x=\bf e,
\end{equation}
where $\mathfrak{S}$ is the $(N + K) \times (K + N)$ matrix 
\bigskip\bigskip
$$\mathfrak{S} := \begin{bmatrix}
A_{0} &  0 & \cdots & 0 & B_{0} & 0 & \cdots & 0\\[5 pt]
A_1 & A_0 & 0 & \cdots & B_1 & B_{0} & \cdots & 0\\
A_2 & A_1 & \ddots & 0 & B_{2} & B_1 & \ddots & 0\\
\vdots & \vdots & \ddots & A_{0} & \vdots & \vdots & \ddots & B_0\\
\vdots  & \vdots & \vdots & A_1 & \vdots & \vdots & \vdots & B_1\\
A_{N} & \vdots &  \ddots & \vdots & B_K & \vdots  & \vdots & \vdots\\
0 & A_{N} & \cdots & \vdots & 0 & B_{K} & \cdots & \vdots\\
0 & 0 & \ddots & \vdots & 0 & 0 & \ddots & \vdots\\
\vdots & \vdots & \ddots & A_{N} & 0 & 0 & 0 & B_{K}
\end{bmatrix},$$
$\hskip 1.3in \underbrace{\quad  \quad \quad \quad \quad  \quad \quad \quad  }_{K} \quad \underbrace{\quad \quad \quad \quad \quad  \quad  \quad}_{N}$
\vskip .10in
$$\mathbf{x} = [R_0, R_1, \ldots, R_{K - 1}, S_{0}, S_1, \ldots, S_{N - 1} ]^{T},$$
and 
$$\mathbf{e} = [1, 0, 0, \ldots, 0]^{T}.$$
Moreover, if $ \alpha_1,\dots, \alpha_N $ are the roots of $ A $ and $ \beta_1,\dots, \beta_K $ are those for $B$, counted with multiplicities, then
\begin{equation}\label{eq:magic formula}
\det{\mathfrak{S}}=A_N^KB_K^N\prod_{i=1}^{N}\prod_{j=1}^{K}(\alpha_i-\beta_j).
\end{equation}
In particular, 
\begin{equation}\label{eq:modulus of S}
|\det{\mathfrak{S}}|=|B_K|^N|A(\beta_1)\cdots A(\beta_K)|
=|A_N|^K|B(\alpha_1)\cdots B(\alpha_N)|.
\end{equation}
\end{Theorem}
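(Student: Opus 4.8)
The plan is to recast the statement in terms of a single linear map and then identify the determinant of that map with the classical Sylvester resultant.

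\medskip
\emph{Reduction to linear algebra.} Consider the linear map
$$\Phi\colon\P_{K-1}\oplus\P_{N-1}\longrightarrow\P_{N+K-1},\qquad \Phi(R,S)=RA+SB.$$
Since $\deg A=N$ and $\deg B=K$, the image lies in $\P_{N+K-1}$, and both source and target have dimension $N+K$. Using the monomial bases $1,z,\dots,z^{K-1}$ and $1,z,\dots,z^{N-1}$ on the two summands of the source and $1,z,\dots,z^{N+K-1}$ on the target, the $t$-th of the first $K$ columns of the matrix of $\Phi$ is the coefficient vector of $z^{t-1}A$ and the $t$-th of the last $N$ columns is the coefficient vector of $z^{t-1}B$; this matrix is exactly $\mathfrak{S}$. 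Thus \eqref{eq:first general Bezout} is equivalent to the linear system $\mathfrak{S}\mathbf{x}=\mathbf{e}$ of \eqref{eq:system for coefficients}, and the existence and uniqueness of $(R,S)$ amount to the invertibility of $\mathfrak{S}$.

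\medskip
\emph{Invertibility, hence existence and uniqueness.} Since source and target have equal dimension, it suffices to prove $\Phi$ injective. If $RA+SB=0$ with $\deg R\le K-1$ and $\deg S\le N-1$, then $A$ divides $SB$; as $A$ and $B$ have no common zero they are coprime, so $A\mid S$, and $\deg S\le N-1<N$ forces $S=0$, hence $R=0$ as well (if $A$ is a non-zero constant, $S=0$ automatically and the same conclusion holds; the symmetric argument handles $B$ constant). Therefore $\mathfrak{S}$ is invertible, $\mathbf{x}=\mathfrak{S}^{-1}\mathbf{e}$ is the unique solution, and the first two assertions follow. We also record the converse mechanism for later use: if $A=(z-\gamma)A_1$ and $B=(z-\gamma)B_1$ share a root $\gamma$, then $(R,S)=(B_1,-A_1)$ satisfies the degree bounds and lies in $\ker\Phi$, so $\det\mathfrak{S}=0$.

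\medskip
\emph{The determinant formula.} It suffices to show, up to sign, that $\det\mathfrak{S}=A_N^{K}\,B(\alpha_1)\cdots B(\alpha_N)$: substituting $B(\alpha_i)=B_K\prod_{j=1}^{K}(\alpha_i-\beta_j)$ then gives \eqref{eq:magic formula}, and writing also $A(\beta_j)=A_N\prod_{i=1}^{N}(\beta_j-\alpha_i)$ and comparing moduli gives \eqref{eq:modulus of S} (for which the sign is irrelevant). Since the roots of $A$ depend continuously on its coefficients, both sides are continuous in those coefficients and we may assume $\alpha_1,\dots,\alpha_N$ distinct. View $\det\mathfrak{S}$ as a polynomial in $B_0,\dots,B_K$; it is homogeneous of degree $N$, because exactly $N$ of its columns carry $B$-entries. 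For each $i$, on the hyperplane $\{B:B(\alpha_i)=0\}$ the polynomials $A$ and $B$ share the root $\alpha_i$, so $\det\mathfrak{S}$ vanishes there by the mechanism above; hence the linear form $B\mapsto B(\alpha_i)$ divides $\det\mathfrak{S}$. As the $\alpha_i$ are distinct these $N$ forms are pairwise non-proportional, so their product divides $\det\mathfrak{S}$, and by the degree count $\det\mathfrak{S}=c(A)\prod_{i=1}^{N}B(\alpha_i)$ with $c(A)$ independent of $B$. Finally, specialize $B(z)=B_Kz^{K}$: the last $N$ columns of $\mathfrak{S}$ become $B_K e_{K+1},\dots,B_K e_{K+N}$, so expanding along them leaves $B_K^{N}$ times the determinant of the lower-triangular $K\times K$ block formed by the first $K$ columns and first $K$ rows, which equals $A_0^{K}$; comparing with $c(A)\prod_i\alpha_i^{K}$ and using $\prod_i\alpha_i=(-1)^{N}A_0/A_N$ identifies $c(A)$ with $A_N^{K}$ up to sign, completing the proof. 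The main difficulty lies entirely in this third step: the divisibility argument among polynomials in the $B_j$ is the engine, and the only delicate points are the continuity reduction to distinct roots and the bookkeeping of the scalar $c(A)$ (including the overall sign, which the various conventions for the Sylvester matrix in the literature absorb and which does not affect \eqref{eq:modulus of S}); the first two steps are routine linear algebra.
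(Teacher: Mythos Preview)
The paper does not actually prove this theorem; it is stated with a reference to standard sources (Gelfand--Kapranov--Zelevinsky and Lang), so there is no ``paper's own proof'' to compare against. Your argument is the classical one and is essentially correct: set up $\Phi(R,S)=RA+SB$ as a linear map between spaces of equal dimension, prove injectivity via coprimality, and then identify $\det\mathfrak{S}$ with the resultant by the divisibility/degree-count argument in the $B$-variables.

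Two small points are worth tightening. First, your specialization $B(z)=B_K z^{K}$ only determines $c(A)$ when $\prod_i\alpha_i\neq 0$, i.e.\ when $A_0\neq 0$; you should include this among the genericity assumptions handled by continuity (perturb $A$ so that its roots are distinct \emph{and} nonzero), or else specialize at $B(z)=B_K(z-\gamma)^K$ for a $\gamma$ avoiding the $\alpha_i$. Second, your hedge ``up to sign'' is not merely prudent but necessary: with the paper's column convention for $\mathfrak{S}$ one actually gets $\det\mathfrak{S}=(-1)^{NK}A_N^{K}B_K^{N}\prod_{i,j}(\alpha_i-\beta_j)$ (check $A(z)=z$, $B(z)=z-1$, where $\det\mathfrak{S}=1$ but the right side of \eqref{eq:magic formula} equals $-1$). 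Since the paper only ever uses $|\det\mathfrak{S}|$ via \eqref{eq:modulus of S}, this discrepancy is harmless, and your remark that the sign is absorbed by convention is exactly the right attitude.
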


\begin{Example}\label{ex:simplest}
Let $\delta > 0$. If 
$$A(z) = z^2  = 0 + 0 z + 1 z^2 \quad \mbox{and} \quad B(z) = z - \delta,$$
then 
$$\mathfrak{S} = \begin{bmatrix}
0 & -\delta & 0 \\
 0 & 1  & -\delta \\
 1 & 0 & 1  
\end{bmatrix}$$
and 
$$\begin{bmatrix}
R_0\\
S_0\\
S_1
\end{bmatrix} = \mathfrak{S}^{-1} \begin{bmatrix} 1\\ 0\\0\end{bmatrix} = \begin{bmatrix} \frac{1}{\delta^2 }\\ -\frac{1}{\delta}\\-\frac{1}{\delta^2}\end{bmatrix}.$$
Thus, 
$$R(z) = \frac{1}{\delta^2} \quad \mbox{and} \quad S(z) = -\frac{1}{\delta} - \frac{1}{\delta^2} z$$ which is the $ n=2 $ case from  Example~\ref{ex:elementary}. 
\end{Example}

\begin{Remark}\label{remarskjdfsdfsdfiii}
\hfill
\begin{enumerate}
\item Requiring that  $ A$ and $B $ are not both constant polynomials ensures the uniqueness of the solutions $ R $ and $ S $ to~\eqref{eq:first general Bezout}. We will tacitly make this nontriviality assumption in the sequel.
\item From the identity
$1=RA+SB=RA+(S/c)(cB),$ it follows that if we  replace $ B $ by $ cB $ ($c\not=0$), then $ S $ gets replaced by $ S/c $, while $ R $ does not change. We will use this rescaling several times.
\item As a consequence of the formulas~\eqref{eq:system for coefficients} and \eqref{eq:magic formula}, along with Cramer's rule, the coefficients of the polynomials $ R$ and $S $ are continuous functions of the roots of $ A,B $, as long as $A$ and $B$ do not have any common roots.

\end{enumerate}
\end{Remark}

Theorem~\ref{le:the real Bezout} yields the following estimate of the coefficients of the polynomials $ R,S $ stated  in a manner similar to Carleson's corona theorem presented in the introduction.

\begin{Corollary}\label{co:rough estimate}
With the notation above, let $ \|A\|, \|B\|\le 1 $, and
\[
 \min\{|A(\beta_j)|, |B(\alpha_i)|, 1 \leq i \leq N, 1 \leq j \leq K\}=\delta>0.
\]
 Then
\begin{equation}\label{eq:rough estimates}
(\|R\|^2 + \|S\|^2)^{\frac{1}{2}} \le \frac{\sqrt{2} (N+K-1)!}{\min\{|A_N|^K\delta^N, |B_K|^N\delta^K  \}}.
\end{equation}
\end{Corollary}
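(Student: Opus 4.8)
The plan is to solve the linear system \eqref{eq:system for coefficients} by Cramer's rule and bound each coordinate of $\mathbf{x}$ by a ratio of determinants. First I would note that by Cramer's rule each coefficient $R_j$ (resp.\ $S_j$) equals $\det(\mathfrak{S}_i)/\det(\mathfrak{S})$, where $\mathfrak{S}_i$ is the matrix obtained from $\mathfrak{S}$ by replacing its $i$-th column with $\mathbf{e} = [1,0,\dots,0]^{T}$. The numerator $\det(\mathfrak{S}_i)$ is, up to sign, the minor of $\mathfrak{S}$ obtained by deleting the $i$-th column and the first row; since every entry of $\mathfrak{S}$ is a coefficient of $A$ or of $B$ and hence bounded in modulus by $\|A\| \le 1$ or $\|B\| \le 1$, the standard Leibniz-formula (or Hadamard) bound for the determinant of an $(N+K-1)\times(N+K-1)$ matrix with entries of modulus at most $1$ gives $|\det(\mathfrak{S}_i)| \le (N+K-1)!$. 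This handles the numerator uniformly in $i$.

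Next I would estimate the denominator $|\det(\mathfrak{S})|$ from below using the ``magic formula'' \eqref{eq:modulus of S}: we have the two expressions $|\det\mathfrak{S}| = |B_K|^N\,|A(\beta_1)\cdots A(\beta_K)| = |A_N|^K\,|B(\alpha_1)\cdots B(\alpha_N)|$. Using the hypothesis that $|A(\beta_j)| \ge \delta$ for all $j$ and $|B(\alpha_i)| \ge \delta$ for all $i$, the first expression is at least $|B_K|^N \delta^K$ and the second is at least $|A_N|^K \delta^N$; hence $|\det\mathfrak{S}| \ge \max\{|A_N|^K\delta^N,\ |B_K|^N\delta^K\}$. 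Wait — the statement has $\min$ in the denominator, which is the weaker (hence valid) lower bound, so I would simply use $|\det\mathfrak{S}| \ge \min\{|A_N|^K\delta^N,\ |B_K|^N\delta^K\}$, which is what appears in \eqref{eq:rough estimates}; using the $\max$ would give a sharper bound but the $\min$ version is what is claimed.

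Combining the two estimates, each of the $N+K$ coordinates of $\mathbf{x}$ satisfies
\[
|x_i| \le \frac{(N+K-1)!}{\min\{|A_N|^K\delta^N,\ |B_K|^N\delta^K\}}.
\]
To finish, I would pass from the uniform $\ell^\infty$ bound on the individual coefficients to the quantity $(\|R\|^2+\|S\|^2)^{1/2}$. Since $\|R\| = \max_j |R_j|$ and $\|S\| = \max_j |S_j|$ are each bounded by the right-hand side above, we get $(\|R\|^2+\|S\|^2)^{1/2} \le \sqrt{2}$ times that bound, which is exactly \eqref{eq:rough estimates}. The only mild subtlety — and the one place to be careful — is the combinatorial determinant bound on the numerator: one should check that the minor in question really is a determinant of size $N+K-1$ with all entries coming from the coefficient arrays of $A$ and $B$ (so bounded by $1$), which is immediate from the band structure of $\mathfrak{S}$ displayed above. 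Everything else is bookkeeping.
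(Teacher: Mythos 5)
Your proof is correct and follows essentially the same route as the paper's: Cramer's rule applied to the Sylvester system, the Leibniz-formula bound $(N+K-1)!$ on the numerator minor (whose entries are coefficients of $A$ and $B$, hence of modulus at most $1$), and the lower bound on $|\det\mathfrak{S}|$ from \eqref{eq:modulus of S}. Your observation that the two factorizations actually give $|\det\mathfrak{S}|\geq\max\{|A_N|^K\delta^N,|B_K|^N\delta^K\}$, so that the stated $\min$ is an a fortiori (slightly weaker) bound, is accurate and harmless.
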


\begin{proof}
	Apply Cramer's rule to the system~\eqref{eq:system for coefficients}. For each coefficient of $ R $ and $ S $, the determinant in the numerator of the formula is a homogeneous function, with $ (N+K-1)! $ terms, in the coefficients of $ A $ and $ B $. By assumption, the latter are all smaller in modulus than~1. 
The denominator  $ \det{\mathfrak{S}} $ can be estimated using~\eqref{eq:modulus of S}.
\end{proof}

Compared to Theorem \ref{th:main estimate}, the estimate given in \eqref{eq:rough estimates}  may be rather rough as seen in the following example.


\begin{Example}\label{ex:new example}
	Fix $0 < \eta < 1$ and define 
	$$ A(z)=z(z-1) \quad \mbox{and} \quad  B(z)=(z-\eta)(z-1+\eta).$$ Then
		\[
	\delta=	\min\{ |A(\eta)|, |A(1-\eta)|, |B(0)|, |B(1)|  \}=\eta(1-\eta).
	\]
	 A computation using the identity $ B(z)=A(z) +\eta(1-\eta)$ leads to
	\[
	R(z)=-\frac{1}{\delta} \quad \mbox{and} \quad S(z)=\frac{1}{\delta}.
	\]
	Hence $(\|R\|^2 + \|S\|^2)^{\frac{1}{2}} =  \sqrt{2}\delta^{-1}$, as expected from Theorem \ref{th:main estimate}, 
	while	 the right hand side of~\eqref{eq:rough estimates} yields 
	$(\|R\|^2 + \|S\|^2)^{\frac{1}{2}} \leq 6 \sqrt{2}  \delta^{-2}$. 
	
\end{Example}

\subsection{B\'ezout formula and interpolation}
Important to our proof of Theorem \ref{th:main estimate} is the following alternate way of finding the solutions $ R$ and $S $ to \eqref{Bezout_C} via interpolation. In standard Lagrange interpolation, one is given distinct complex numbers $ x_1, \ldots, x_t $ (nodes) and complex numbers $y_1 \ldots, y_t$ (targets) and asked to produce a $p \in \P_{t - 1}$ such that $p(x_j) = y_j$ for all $1 \leq j \leq t$.
This unique polynomial $p \in \P_{t - 1}$ is given by the formula
\begin{equation}\label{eq:interpolation difference}
p(x) = y_1 \delta_{1}(x) + y_2 \delta_2(x) + \cdots + y_t \delta_{t}(x), \end{equation}
where
$$ \delta_{j}(x) = \frac{\prod_{i = 1; i \not = j}^{t} (x - x_i)}{\prod_{i = 1; i \not = j}^t (x_j - x_i)}.$$

Hermite interpolation extends Lagrange interpolation to include specifying derivatives. 

\begin{Lemma}[Hermite interpolation]\label{le:hermite interpolation}
	Let $ x_1, \dots, x_t \in \C$ be distinct, $\ell_1, \ell_2, \ldots, \ell_t \in \N$,  and for each $1 \leq j \leq t$, let $ y_j^0, \dots, y_j^{\ell_j-1} \in \C$. 
	
	\begin{enumerate}
		\item If $ L=\ell_1+\dots+\ell_t $, then there is a unique $p \in \P_{ L-1}$, such that 
		$$ p^{(k)}(x_j)=y_j^k  \quad \mbox{for all $0 \le k\le \ell_j-1$ and $1\le j\le t$.}$$ 
		
		\item If  $ x_j $ and $ \ell_j $ are fixed, one can write 
		$$ p=p_1+\dots+p_t,$$
		where for each $1 \leq j \leq t$, the polynomial $p_j$ and its coefficients depend linearly on $(y_{j}^{k})_{k = 0}^{\ell_{j}- 1} \in \C^{\ell_j}$. In fact, 
		$$ p_j^{(k)}(x_j)=y_j^k \quad \mbox{for all $ 0\le k\le \ell_j-1 $,}$$ while, if $ i\not=j $, then 
		$$ p_j^{(k)}(x_i)=0 \quad \mbox{for all $ 0\le k\le \ell_i-1 $}.$$
	\end{enumerate}
\end{Lemma}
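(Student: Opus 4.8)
The plan is to deduce both parts from the single observation that a natural linear evaluation map between two vector spaces of equal dimension is an isomorphism. I would define $\Phi \colon \P_{L-1} \to \C^{L}$ by sending $p$ to the list of values $\big(p^{(k)}(x_j)\big)$ with $0 \le k \le \ell_j - 1$ and $1 \le j \le t$, grouped into $t$ blocks, one per node. Both spaces have dimension $L = \ell_1 + \dots + \ell_t$, and $\Phi$ is clearly linear, so for part (a) it suffices to prove that $\Phi$ is injective; its bijectivity then yields, for any prescribed data $(y_j^k)$, a unique $p \in \P_{L-1}$ with $\Phi(p) = (y_j^k)$, which is exactly the assertion.

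To see injectivity, suppose $p \in \P_{L-1}$ lies in $\ker \Phi$, that is, $p^{(k)}(x_j) = 0$ for all $0 \le k \le \ell_j - 1$ and all $j$. Vanishing of $p$ together with its first $\ell_j - 1$ derivatives at $x_j$ means $x_j$ is a root of $p$ of multiplicity at least $\ell_j$, so $(z - x_j)^{\ell_j}$ divides $p$. Since $x_1, \dots, x_t$ are pairwise distinct, the factors $(z - x_j)^{\ell_j}$ are pairwise coprime, hence their product $\prod_{j=1}^{t} (z - x_j)^{\ell_j}$, of degree $L$, divides $p$. As $\deg p \le L - 1 < L$, this forces $p \equiv 0$, so $\Phi$ is injective and therefore bijective.

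For part (b), I keep $x_j$ and $\ell_j$ fixed, so that $\Phi$ and its inverse are fixed. For each $j$ let $e^{(j)} \in \C^L$ be the vector whose $x_j$-block equals $(y_j^0, \dots, y_j^{\ell_j-1})$ and whose other blocks vanish, and set $p_j := \Phi^{-1}(e^{(j)}) \in \P_{L-1}$. By construction $p_j^{(k)}(x_j) = y_j^k$ for $0 \le k \le \ell_j - 1$ and $p_j^{(k)}(x_i) = 0$ for $i \ne j$ and $0 \le k \le \ell_i - 1$. Because $e^{(1)} + \dots + e^{(t)}$ is precisely the full data vector and $\Phi^{-1}$ is linear, $p_1 + \dots + p_t = \Phi^{-1}\big(\sum_j e^{(j)}\big) = p$, the polynomial from part (a). Finally $e^{(j)}$ depends linearly on $(y_j^k)_{k=0}^{\ell_j - 1}$, hence so does $p_j = \Phi^{-1}(e^{(j)})$; and since each coefficient of a polynomial is a linear functional of the polynomial, the coefficients of $p_j$ depend linearly on $(y_j^k)_k$ as well.

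The only real content is the injectivity step — converting the vanishing-derivative conditions at $x_j$ into the divisibility $(z - x_j)^{\ell_j} \mid p$ and then patching these together across the distinct nodes by coprimality; once $\Phi$ is known to be an isomorphism, parts (a) and (b) are bookkeeping. If explicit interpolants turn out to be wanted later, one can instead construct $p_j$ directly in the form $p_j(z) = \big(\prod_{i \ne j}(z-x_i)^{\ell_i}\big)\, q_j(z)$ with $q_j \in \P_{\ell_j - 1}$ determined by matching the Taylor coefficients of $p_j$ at $x_j$ up to order $\ell_j - 1$; this is a triangular linear system for the coefficients of $q_j$ with nonzero diagonal entries (coming from $\prod_{i\ne j}(x_j - x_i)^{\ell_i} \ne 0$), so it is uniquely solvable, and $p = \sum_j p_j$ follows once more from the uniqueness in part (a).
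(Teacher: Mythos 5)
Your proof is correct. The paper states this lemma without proof, treating Hermite interpolation as classical; your argument — the evaluation map $\Phi\colon \P_{L-1}\to\C^{L}$ between spaces of equal dimension, injectivity via the divisibility $\prod_{j}(z-x_j)^{\ell_j}\mid p$ forcing $p\equiv 0$, and part (b) by applying the linear map $\Phi^{-1}$ to the block decomposition of the data vector — is the standard one and establishes everything the paper uses, including the linear dependence of the coefficients of each $p_j$ on $(y_j^k)_k$ needed in Corollary~\ref{co:bezout by interpolation}.
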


Let $ \alpha_1, \alpha_2, \ldots, \alpha_I$ denote the {\em distinct} roots of $A$ with corresponding multiplicities $\mu_1, \mu_2, \ldots, \mu_{I}$ and  $\beta_1, \beta_2, \ldots, \beta_{J}$ denote the {\em distinct} roots of $B$ with corresponding multiplicities $\nu_1, \nu_2, \ldots, \nu_{J}$. 
Below, the roots $\alpha_j$ and $\beta_j$ are no longer counted with multiplicities (different from  the notation from Theorem \ref{le:the real Bezout}). Also recall that 
$$N = \deg A = \sum_{i = 1}^{I} \mu_{j} \quad \mbox{and} \quad K =  \deg B = \sum_{j = 1}^{J} \nu_j.$$

\begin{Corollary}\label{co:bezout by interpolation}
With the notation above, define:
\begin{itemize}
	\item[--] $ R \in \P_{K - 1}$ with the property that, for each $ 1\le j\le J $,
\begin{equation}\label{eq:interpR}
	R(\beta_j)A(\beta_j)=1, \quad
	(RA)^{(k)}(\beta_j)=0\quad\text{ for }1\le k\le\nu_j-1;
\end{equation}

	\item[--] $ S \in \P_{N - 1}$ with the property that, for each $ 1\le i\le I $,
	\begin{equation}\label{eq:interpS}
		S(\alpha_i)B(\alpha_i)=1, \quad
		(SB)^{(\ell)}(\alpha_i)=0\quad\text{ for }1\le\ell\le\mu_i-1.
	\end{equation}
	
\end{itemize}

Then,

\begin{enumerate}
	\item  $ R,S $ are  the minimal solutions of B\'ezout's equation~\eqref{eq:first general Bezout}.
	
	\item One can decompose  $R$ as 
	$$R=\sum_{j = 1}^J R_j,$$  where the coefficients of  the polynomials $ R_j $ are rational functions of $A(\beta_j), \dots, A^{(\nu_j-1)}(\beta_j)  $, with denominator $ (A(\beta_j))^{\nu_j}\not=0 $.

\item Similarly, one can decompose $S$ as  $$ S=\sum_{i=1}^I S_i ,$$ where the coefficients of the polynomials $ S_i $ are rational functions of
	$B(\alpha_i), \dots, B^{(\mu_i-1)}(\alpha_i)$, with denominator $ (B(\alpha_i))^{\mu_i} \not = 0$.
\end{enumerate}

\end{Corollary}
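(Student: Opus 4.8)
The plan is to verify first that the polynomials $R$ and $S$ defined by the Hermite interpolation conditions \eqref{eq:interpR} and \eqref{eq:interpS} actually satisfy B\'ezout's equation, and then to read off the decomposition and rationality statements directly from the linearity part of Lemma~\ref{le:hermite interpolation}. For part (a), set $Q := RA + SB - 1 \in \P_{N+K-1}$. I would compute the order of vanishing of $Q$ at each root of $A$ and of $B$. At $\beta_j$: since $(RA)(\beta_j) = 1$ and $(RA)^{(k)}(\beta_j) = 0$ for $1 \le k \le \nu_j - 1$ by \eqref{eq:interpR}, while $B$ vanishes to order $\nu_j$ at $\beta_j$ (so $(SB)^{(k)}(\beta_j) = 0$ for $0 \le k \le \nu_j - 1$ by the Leibniz rule), we get $Q^{(k)}(\beta_j) = 0$ for $0 \le k \le \nu_j - 1$. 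Symmetrically, using \eqref{eq:interpS} and the fact that $A$ vanishes to order $\mu_i$ at $\alpha_i$, we get $Q^{(\ell)}(\alpha_i) = 0$ for $0 \le \ell \le \mu_i - 1$. Hence $Q$ is divisible by $\prod_i (z - \alpha_i)^{\mu_i} \cdot \prod_j (z - \beta_j)^{\nu_j} = A(z) B(z)$ (using that $A$ and $B$ have no common roots, so these factors are coprime). Since $\deg Q \le N + K - 1 < N + K = \deg(AB)$, we conclude $Q \equiv 0$, i.e. $RA + SB \equiv 1$. The degree bounds $\deg R \le K - 1$ and $\deg S \le N - 1$ are built into the interpolation, so by the uniqueness in Theorem~\ref{le:the real Bezout} these are the minimal solutions. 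One subtlety to address: I must check that the interpolation data in \eqref{eq:interpR} genuinely determines $R \in \P_{K-1}$ — this is exactly Lemma~\ref{le:hermite interpolation}(a) with $t = J$, nodes $\beta_j$, orders $\ell_j = \nu_j$, and $L = \sum_j \nu_j = K$, after translating the conditions on $(RA)^{(k)}(\beta_j)$ into conditions on $R^{(k)}(\beta_j)$ via the Leibniz formula and the invertibility of the resulting triangular system (the diagonal entries are powers of $A(\beta_j) \ne 0$).

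For part (b), I would apply Lemma~\ref{le:hermite interpolation}(b) to the interpolation problem defining $R$: write $R = \sum_{j=1}^{J} R_j$ where $R_j$ and its coefficients depend linearly on the target data for the node $\beta_j$. The point is to identify that target data. The conditions \eqref{eq:interpR} prescribe $(RA)^{(k)}(\beta_j)$ for $0 \le k \le \nu_j - 1$; expanding by Leibniz, $(RA)^{(k)}(\beta_j) = \sum_{m=0}^{k} \binom{k}{m} R^{(m)}(\beta_j) A^{(k-m)}(\beta_j)$, which is a linear system for the vector $(R^{(m)}(\beta_j))_{m=0}^{\nu_j-1}$ with lower-triangular coefficient matrix having $A(\beta_j)$ on the diagonal. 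Solving this system by Cramer's rule (or forward substitution) expresses each $R^{(m)}(\beta_j)$ as a rational function of $A(\beta_j), A'(\beta_j), \dots, A^{(\nu_j-1)}(\beta_j)$ with denominator a power of $A(\beta_j)$; since the determinant is $A(\beta_j)^{\nu_j}$, the common denominator is $A(\beta_j)^{\nu_j}$. Feeding these into the (fixed, depending only on $\beta_j$ and $\nu_j$) linear formula for the coefficients of $R_j$ from Lemma~\ref{le:hermite interpolation}(b) shows those coefficients are rational in $A(\beta_j), \dots, A^{(\nu_j-1)}(\beta_j)$ with denominator $A(\beta_j)^{\nu_j} \ne 0$. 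Part (c) is proved identically, exchanging the roles of $A$ and $B$ and of $\alpha_i$ and $\beta_j$.

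I expect the only real obstacle to be bookkeeping: carefully passing between the ``product'' data $(RA)^{(k)}(\beta_j)$ that appears in \eqref{eq:interpR} and the ``plain'' data $R^{(m)}(\beta_j)$ that Lemma~\ref{le:hermite interpolation} takes as input, and checking that this triangular change of variables is invertible with the claimed denominator. Everything else is a direct application of the two tools already in hand — the divisibility/degree-count argument for (a) and the linearity clause of Hermite interpolation for (b) and (c). No estimates are needed here; the quantitative content enters only later, when this decomposition is combined with bounds on the rational functions involved.
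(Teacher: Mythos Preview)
Your proposal is correct and follows essentially the same route as the paper: for (a) you show $Q=RA+SB-1$ vanishes to the right order at each $\alpha_i$ and $\beta_j$ and conclude by a degree count, which is equivalent to the paper's appeal to uniqueness in Lemma~\ref{le:hermite interpolation}(a); for (b) and (c) both you and the paper invoke Lemma~\ref{le:hermite interpolation}(b) and then solve the Leibniz triangular system to express $R^{(m)}(\beta_j)$ as rational functions with denominator a power of $A(\beta_j)$. Your explicit remark that the conditions \eqref{eq:interpR} really determine $R\in\P_{K-1}$ (via the invertible triangular change of variables) is a point the paper leaves implicit, but otherwise the arguments coincide.
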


\begin{proof} (a)
	The values of $ RA+SB $, and its derivatives up to $ \mu_j-1 $ at $ \alpha_j $ and up to $ \nu_j-1 $ at $ \beta_j $,  coincide with those of the constant function $ 1 $. From Lemma~\ref{le:hermite interpolation} it follows that ~\eqref{eq:first general Bezout} holds.
	
	(b) 
	By Lemma~\ref{le:hermite interpolation}, one can write $ R=\sum_{j=1}^J R_j $, where the coefficients of the polynomials $ R_j $ are linear functions (with coefficients depending on $ \beta_j $) of the prescribed values at $ \beta_j $ of $ R $ and its first $ \nu_j-1 $ derivatives.
	Using Leibnitz's formula and induction on $ k $, the definition of $ R$ implies that $ R^{(k)}(\beta_j) $ is a rational function of
	$$A(\beta_j), \dots, A^{(k)}(\beta_j) ,$$ with denominator $ (A(\beta_j))^{k+1}  $.  This last value is different from zero, since we are always assuming that $ A $ and $ B $ have no common zeros.
	
	A similar argument yields (c).
	\end{proof}

\section{Proof of Theorem \ref{th:main estimate}}



%

The proof requires some preliminary set up. For $p(z) = \sum_{j = 0}^{K} p_j z^j \in \P_{K}$, we defined the norm 
$$\|p\| = \max_{0 \leq j \leq K} |p_j|$$ in  \eqref{normofaplyiinao} and used it in the statement of Theorem \ref{th:main estimate}. 
We will also need these two norms on $\P_{K}$:
\begin{equation}\label{threenorms}
\|p\|':=\max_{|z|=1} |p(z)| \quad  \mbox{and} \quad \|p\|'':=\max_{|z|=3} |p(z)|.
\end{equation}
Since $\P_{K}$ is a finite dimensional vector space, all norms on $\P_{K}  $ are equivalent. Hence there is a
 $ D>0 $, depending on $K$, such that 
\begin{equation}\label{eq:equivalence of norms}
\frac{1}{\sqrt{D}}\|p\|\le \|p\|' , \|p\|''\le \sqrt{D}\|p\| \quad \mbox{for all $p \in \P_{K}$.}
\end{equation}
As a consequence, if $\|T\|, \|T\|^{'}, \|T\|^{''}$ denote the corresponding operator norms of a linear transformation $T: \P_{K} \to \P_{K}$, where $\P_{K}$ is endowed with the respective norms $\|p\|, \|p\|^{'}$, and $\|p\|^{''}$, then
\begin{equation}\label{eq:equivalence of norms - operators}
\|T\|\le D\|T\|' \quad \mbox{and} \quad \|T\|\le D\|T\|''.
\end{equation}

The statement and proof of Theorem \ref{th:main estimate} contains constants that will depend  on $ A \in \P_{N}$. To define these constants, we proceed as follows. Recall that $\alpha_1, \ldots, \alpha_n$ are the zeros of $A$ with corresponding multiplicities $m_1, \ldots, m_n$.  
Fix positive constants $\rho$ and $c_1, c_2, \ldots, c_n$ such that
\begin{equation}\label{eq:*}
	|z-\alpha_j|\le 2\rho \implies |A(z)|\geqslant c_j|z-\alpha_j|^{m_j}.
\end{equation}
 If $ \mathfrak B_j  = \{z: |z - \alpha_j| \leq \rho\}$, fix  $\eta$ such that 
\begin{equation}\label{eq:**}
	0 < \eta\le\inf\Big\{|A(z)|: z\notin \bigcup_{j = 1}^{n} \mathfrak B_j  \Big\}.
\end{equation}
Since $ A $ is a monic polynomial of degree $ N $, we can choose $ M>0 $ such that  \begin{equation}\label{emmmmmmm}
 |w|\geqslant M  \implies |A(w)|\geqslant \tfrac{1}{2} |w|^N.
 \end{equation}
 We can also assume that $ 0<\rho, \eta, c_j <1 $  for all $1 \leq j \leq n$ and that $ M>1 $. From \eqref{emmmmmmm} it follows that $ M\geqslant |\alpha_i| $ for all $1 \leq i \leq n$.

Since Theorem \ref{th:main estimate} needs to hold for all $B \in \P_{K}$, we assume from now on that $\deg B = k \leq K$. By Remark~\ref{remarskjdfsdfsdfiii}(c), we may suppose that $ B $ has simple roots $ \beta_1, \dots, \beta_k $ and extend the estimate~\eqref{eq:estimate} by continuity if $B$ has roots of higher multiplicity. Thus, we use the notation 
\begin{equation}\label{56tfgcvbnklIJH}
B(z)=B_0+\dots+B_kz^k=B_k(z-\beta_1)\cdots(z-\beta_k), \quad \beta_i \neq \beta_j.
\end{equation}

To summarize, we have fixed  positive numbers $ D $ (depending only on $ K $) and $ \rho, \eta, M, c_j $ (depending only on $ A $). 
This notation will be used below without further comment.
Our  proof of Theorem~\ref{th:main estimate} begins with a few preliminary lemmas. 

\begin{Lemma}\label{le:division by root}
	 Suppose $ p\in \P_{K} $ and $ p(u)=0 $. If ${\displaystyle q(z)=\frac{p(z)}{z-u}}$, then 
	\[
	\left\| q \right\|\le D\|p\|.
	\]
\end{Lemma}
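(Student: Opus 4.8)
The plan is to reduce the statement to a uniform bound on the coefficients of $q$ in terms of the coefficients of $p$, using the fact that division by a linear factor is a well-behaved operation once we control where the root $u$ can sit. First I would observe that since $p \in \P_K$ vanishes at $u$, we have $\deg q \le K-1$, so $q \in \P_K$ as well, and the map $p \mapsto q = p/(z-u)$ is linear on the (finite-dimensional) subspace $\{p \in \P_K : p(u) = 0\}$. The cleanest route is to pass to the norm $\|\cdot\|'$ on $\P_K$ via the equivalence \eqref{eq:equivalence of norms}: it suffices to bound $\|q\|'$ by a constant times $\|p\|'$, because then $\|q\| \le \sqrt D \|q\|' \le \sqrt D \cdot (\text{const}) \cdot \|p\|' \le D \cdot (\text{const}) \cdot \|p\|$, and one absorbs the constant. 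But in fact we want the clean bound $\|q\| \le D\|p\|$ exactly as stated, so I would be slightly more careful about which norms to interpolate through — see below.

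The analytic heart of the argument: write $q(z) = \frac{1}{2\pi i}\oint_{|\zeta| = r} \frac{q(\zeta)}{\zeta - z}\,d\zeta$ for $z$ inside the circle, but more directly, for $|z| = 1$ use $q(z) = \frac{p(z)}{z-u}$ when $|z-u|$ is bounded below. The trouble is that $u$ could be very close to the unit circle, making $\|q\|'$ large — indeed $q$ genuinely can blow up there. So the $\|\cdot\|'$ norm alone is not enough; this is exactly why the paper introduced the norm $\|\cdot\|''$ on the circle $|z| = 3$. The key point is that $q$ is a polynomial of degree $\le K-1$, hence determined by, and with coefficients controlled by, its values on any fixed circle; I would estimate $\|q\|''$, i.e.\ $\max_{|z|=3}|q(z)|$. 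Here $q(z) = p(z)/(z-u)$, and if $|u| \le 2$ then $|z - u| \ge 1$ on $|z| = 3$, giving $\|q\|'' \le \|p\|''$ directly; if $|u| > 2$, then $u$ is far from the disk and one can instead use that $p(z) = (z-u)q(z)$ forces control of $q$ from $p$ near the origin (or simply note $|z-u| \ge |u| - 3$ could be small, so handle $2 < |u|$ by yet another contour or by the observation that then $p$ has a root of modulus $>2$, and factor differently). Actually the simplest uniform statement: $q(z) = \frac{p(z) - p(u)}{z - u}$ (since $p(u)=0$) $= \sum_{k} p_k \frac{z^k - u^k}{z-u} = \sum_k p_k (z^{k-1} + z^{k-2}u + \cdots + u^{k-1})$, so each coefficient of $q$ is an explicit polynomial in $u$ with coefficients among the $p_k$'s — but this still involves powers of $u$, which are unbounded.

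So the honest resolution must restrict $u$: the lemma is applied in the proof only to roots of $A$ or $B$, which lie in a bounded region (the $\beta_i$ satisfy $|\beta_i| \le M$ by \eqref{emmmmmmm}-type reasoning once $B$ is normalized, and the $\alpha_j$ satisfy $|\alpha_j| \le M$ explicitly). However, as \emph{stated}, the lemma has no hypothesis on $u$, so either (i) there is an implicit convention that $u$ ranges over a compact set, or (ii) the constant $D$ genuinely works for all $u \in \C$ because of cancellation. I suspect the intended argument is (ii) via the following: parametrize and use that on $|z| = 3$, if $|u| \le 3$ write $q(z)(z-u) = p(z)$ and read coefficients; the map $(q_0,\dots,q_{K-1}) \mapsto (p_0,\dots,p_K)$ has a triangular structure with the diagonal entries being $-u$ and $1$, which is invertible with inverse bounded by ... $|u|$, again unbounded. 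I would therefore present the proof under the (harmless, and surely intended) reading that $u$ ranges over a fixed compact set $\Omega$ — concretely $|u| \le M$ or $u$ in the closed disk of radius $3$ — and then the bound is clean: by compactness of $\Omega$ and continuity, $\sup_{u \in \Omega}\| p \mapsto p/(z-u)\|' < \infty$; rename this finite supremum suitably and conclude via \eqref{eq:equivalence of norms - operators} that $\|q\| \le D \|p\|$ after rechoosing $D$. \textbf{The main obstacle} is thus not any deep estimate but pinning down exactly what uniformity in $u$ is being claimed and choosing the chain of norm equivalences so that the final constant collapses to the single symbol $D$ already in play; once $u$ is confined to a compact set, the rest is the routine observation that $q = p/(z-u)$ is a bounded linear map between finite-dimensional normed spaces, depending continuously on $u$.

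\begin{proof}[Proof sketch]
Since $p(u) = 0$, the polynomial $q(z) = p(z)/(z-u)$ has degree at most $K-1$, so $q \in \P_K$, and $p \mapsto q$ is a linear map on $\{p \in \P_K : p(u) = 0\}$. We may assume $u$ lies in the compact set relevant to the application (the closed disk of radius $M$, say $|u| \le M$); for $|z| = 3$ we then have $|z - u| \ge 3 - M$ — after enlarging $M$ if necessary one instead works on a large enough circle $|z| = \Lambda$ with $\Lambda > M$, on which $|z-u| \ge \Lambda - M =: \theta > 0$ uniformly. Hence
\[
\max_{|z| = \Lambda} |q(z)| = \max_{|z| = \Lambda} \frac{|p(z)|}{|z-u|} \le \frac{1}{\theta}\max_{|z| = \Lambda} |p(z)|.
\]
Since all norms on the finite-dimensional space $\P_K$ are equivalent, this gives $\|q\| \le C_1 \max_{|z|=\Lambda}|q(z)| \le \frac{C_1}{\theta}\max_{|z|=\Lambda}|p(z)| \le \frac{C_1 C_2}{\theta}\|p\|$, where $C_1, C_2$ depend only on $K$ (and $\Lambda$, which depends only on $A$). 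Renaming this product as $D$ — consistently with the constant $D$ of \eqref{eq:equivalence of norms}, by enlarging it if needed — yields $\|q\| \le D\|p\|$.
\end{proof}
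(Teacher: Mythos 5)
You have the right tools in hand (the two auxiliary norms $\|\cdot\|'$, $\|\cdot\|''$ and the operator-norm comparison \eqref{eq:equivalence of norms - operators}), and your treatment of the case $|u|\le 2$ is exactly the paper's argument: on $|z|=3$ one has $|z-u|\ge 1$, so $p\mapsto p/(z-u)$ is a contraction in $\|\cdot\|''$ on the subspace $\{p:p(u)=0\}$, and \eqref{eq:equivalence of norms - operators} converts this into $\|q\|\le D\|p\|$. But you then abandon the case $|u|>2$ and retreat to assuming $u$ lies in a compact set. That proves a strictly weaker statement than the lemma claims, with a constant depending on $A$ (through your $\Lambda$) rather than the fixed $D$ of \eqref{eq:equivalence of norms}, which depends only on $K$ and which the paper reuses verbatim later (e.g.\ in $\widetilde B = B/(D(z-\beta_k))$ and the ensuing lower bound $|\widetilde B(\alpha_i)|\ge \delta/(3DM)$).

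The missing idea is simply to switch to the \emph{other} norm when $|u|>2$: on the unit circle $|z|=1$ one has $|z-u|\ge |u|-1>1$, so division by $z-u$ is a contraction in $\|\cdot\|'$, and \eqref{eq:equivalence of norms - operators} again yields $\|q\|\le D\|p\|$. This is precisely why the paper carries both circles $|z|=1$ and $|z|=3$: whichever side of the circle $|z|=2$ the point $u$ falls on, it is at distance at least $1$ from one of the two circles, and the single constant $D$ works uniformly for every $u\in\C$ with no compactness hypothesis. Your concern that $\|q\|'$ can blow up when $u$ is near $\T$ is correct but harmless, since those $u$ satisfy $|u|\le 2$ and are covered by the $\|\cdot\|''$ case; the two cases together close the argument.
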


\begin{proof}
For at least one of the norms $ \|\cdot\|' $ or $ \|\cdot\|'' $ from \eqref{threenorms},   depending on the position of $u$ with respect to the circle $ |z|=2 $, multiplication by $(z - u)^{-1} $ on the subspace $\{p \in \P_{N}: p(u) = 0\}$ is a contraction. The conclusion is now a consequence of~\eqref{eq:equivalence of norms - operators}.
\end{proof}

The next result follows from  Theorem~\ref{le:the real Bezout} and Cramer's rule. 

\begin{Lemma}\label{le:Bezout for this case}
Suppose $ B_k=1 $.
\begin{enumerate}
\item The coefficients of $ R $ are quotients $ \Delta_i/\det{ \mathfrak{S}} $, where $ \Delta_i $ is obtained from $ \det{\mathfrak{S}} $ by replacing one of the first $ k $ columns with a column consisting of 0s, except  a $1$ in the first position.

\item $ |\det{ \mathfrak{S}}|=|A(\beta_1)\cdots A(\beta_k)| $.

\item For each $1 \leq i \leq k$,   $ \Delta_i $ is a polynomial, with coefficients depending on $ A $,  of degree at most $ N $ in the coefficients of $ B $. 

\item Alternatively, using Vi\`ete's formulas which relate polynomial coefficients  to  sums of products of its roots, $ \Delta_i $ is a polynomial, again with coefficients depending on $ A$, of separate degree at most $ N $ in each of the roots $ \beta_1, \dots, \beta_k $.

\end{enumerate}
\end{Lemma}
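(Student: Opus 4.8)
The first two parts are immediate from Theorem~\ref{le:the real Bezout}, so I would dispatch them quickly. For (a), recall from \eqref{eq:system for coefficients} that the minimal solutions satisfy $\mathfrak{S}\mathbf{x}=\mathbf{e}$ with $\mathbf{x}=[R_0,\dots,R_{k-1},S_0,\dots,S_{N-1}]^{T}$ and $\mathbf{e}=[1,0,\dots,0]^{T}$; since the roots of $A$ and $B$ are disjoint, $\mathfrak{S}$ is invertible and Cramer's rule applies. As $R_{i-1}$ is the $i$-th coordinate of $\mathbf{x}$ with $1\le i\le k$, this gives $R_{i-1}=\Delta_i/\det\mathfrak{S}$, where $\Delta_i$ is the determinant of the matrix obtained from $\mathfrak{S}$ by replacing its $i$-th column — one of the first $k$ columns — with $\mathbf{e}$. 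For (b), I would simply substitute $B_k=1$ into the identity $|\det\mathfrak{S}|=|B_k|^{N}|A(\beta_1)\cdots A(\beta_k)|$ supplied by \eqref{eq:modulus of S}, with $k$ playing the role of $K$.

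For (c), the key is to keep the block structure of $\mathfrak{S}$ in view: the first $k$ columns carry the coefficients $A_0,\dots,A_N$ (each shifted one step further down the column), while the last $N$ columns carry $B_0,\dots,B_k$ in the same fashion. In $\Delta_i$ the $i$-th column equals $\mathbf{e}$, which has a single nonzero entry, so expanding the determinant $\Delta_i$ along that column leaves, up to a sign, one $(N+k-1)\times(N+k-1)$ minor assembled from $k-1$ of the ``$A$-columns'' together with all $N$ of the ``$B$-columns''. In the Leibniz expansion of this minor every nonzero term is a product of exactly $N$ entries drawn from the $B$-columns — each such entry being some $B_j$ — times a product of $k-1$ entries drawn from the $A$-columns, times a sign. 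Consequently $\Delta_i$ is a polynomial in $B_0,\dots,B_k$ of total degree at most $N$ whose coefficients are integer combinations of products of the $A_j$, hence depend only on $A$ (and on the index $i$).

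For (d), I would feed Viète's formulas into the expression just obtained. Because $B_k=1$, each coefficient $B_j$ equals $(-1)^{k-j}$ times the elementary symmetric polynomial $e_{k-j}(\beta_1,\dots,\beta_k)$, which is multilinear and therefore of degree at most $1$ in each $\beta_\ell$. Thus a monomial $\prod_j B_j^{d_j}$ with $\sum_j d_j\le N$ becomes, after substitution, a polynomial in $\beta_1,\dots,\beta_k$ of degree at most $\sum_j d_j\le N$ in each individual variable, and summing over the monomials appearing in $\Delta_i$ yields the stated separate-degree bound, with coefficients still built from the $A_j$ alone. I do not expect any genuine difficulty here: the only place demanding care is the combinatorial bookkeeping in (c)--(d), namely correctly labelling which columns of $\mathfrak{S}$ carry $A$ versus $B$ and verifying that expanding along the $\mathbf{e}$-column really leaves exactly $N$ ``$B$-columns'' behind, so that is the step I would double-check most carefully.
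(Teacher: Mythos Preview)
Your proposal is correct and follows exactly the approach the paper indicates: the paper's ``proof'' is the single line ``The next result follows from Theorem~\ref{le:the real Bezout} and Cramer's rule,'' and you have simply spelled out those details carefully. Your bookkeeping in (c)--(d) (expanding along the $\mathbf{e}$-column to leave $k-1$ $A$-columns and all $N$ $B$-columns, then substituting the multilinear elementary symmetric polynomials for the $B_j$) is accurate and matches what the paper has in mind.
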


The proof of Theorem \ref{th:main estimate} will depend on the position of the roots $ \beta_1, \dots, \beta_k $ of $B$. We begin with a simple case.

\begin{Lemma}\label{le:w_i large}
There is a $ C > 0$, depending only on $ A $ and $ K $, such that if $ |\beta_i|\geqslant M $ for all $1 \leq i \leq k$, then $ \|R\|\le C $. 
\end{Lemma}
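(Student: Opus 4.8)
The plan is to feed the large-root hypothesis into the Cramer's-rule formula of Lemma~\ref{le:Bezout for this case} and observe that, when all roots of $B$ lie far from the origin, the numerators $\Delta_i$ and the determinant $\det\mathfrak S$ grow at the same rate in the variables $\beta_1,\dots,\beta_k$, so the quotients defining the coefficients of $R$ stay bounded by a constant depending only on $A$ and $K$.

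First I would normalize. By Remark~\ref{remarskjdfsdfsdfiii}(b), replacing $B$ by $B/B_k$ (legitimate since $\deg B=k$ forces $B_k\neq0$) leaves $R$ unchanged, so assume $B_k=1$, whence Lemma~\ref{le:Bezout for this case} is available. It tells us that each coefficient $R_i$ of $R$ equals $\Delta_i/\det\mathfrak S$, where $|\det\mathfrak S|=|A(\beta_1)\cdots A(\beta_k)|$ by part~(b), and where, by part~(d), $\Delta_i$ is a polynomial in $\beta_1,\dots,\beta_k$ whose coefficients depend only on $A$ and which has separate degree at most $N$ in each $\beta_j$.

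Next come the two estimates. For the denominator, the hypothesis $|\beta_j|\geqslant M$ together with~\eqref{emmmmmmm} gives $|A(\beta_j)|\geqslant\tfrac12|\beta_j|^N$, so $|\det\mathfrak S|\geqslant 2^{-k}\prod_{j=1}^k|\beta_j|^N$. For the numerator, write $\Delta_i=\sum_{\vec a}c_{\vec a}\beta_1^{a_1}\cdots\beta_k^{a_k}$ over multi-indices $\vec a$ with $0\leqslant a_j\leqslant N$; there are at most $(N+1)^k$ such terms and, since for fixed $A$ and $k\leqslant K$ there are only finitely many such polynomials $\Delta_i$, the coefficients satisfy $|c_{\vec a}|\leqslant C_0$ for some constant $C_0=C_0(A,K)$. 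Because $M>1$ we have $|\beta_j|\geqslant1$, hence $|\beta_j|^{a_j}\leqslant|\beta_j|^N$, which yields $|\Delta_i|\leqslant C_0(N+1)^k\prod_{j=1}^k|\beta_j|^N$. Dividing, $|R_i|\leqslant 2^k(N+1)^kC_0\leqslant 2^K(N+1)^KC_0=:C$ for every $i$, so $\|R\|\leqslant C$. The passage from simple roots of $B$ to roots of arbitrary multiplicity is handled by the continuity of the coefficients of $R$ in the roots (Remark~\ref{remarskjdfsdfsdfiii}(c)), exactly as arranged just before the lemma.

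The arithmetic is routine; the genuine point — and the step I would be most careful about — is the matching of growth rates, which is precisely why the ``separate degree at most $N$'' assertion in Lemma~\ref{le:Bezout for this case}(d), rather than a bound on the total degree, is what is needed, and why $M>1$ is used so that the top powers of $|\beta_j|$ dominate the lower ones. It is worth noting that the lower bound $|B(\alpha_j)|\geqslant\delta$ plays no role here: roots of $B$ far from the origin are automatically far from the fixed roots of $A$, so no small quantity ever enters the denominator $\det\mathfrak S$.
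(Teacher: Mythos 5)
Your proposal is correct and follows the paper's proof essentially verbatim: normalize to $B_k=1$ via Remark~\ref{remarskjdfsdfsdfiii}(b), invoke Lemma~\ref{le:Bezout for this case} for the Cramer quotients $\Delta_i/\det\mathfrak S$, bound the denominator from below by $2^{-K}\prod_j|\beta_j|^N$ using~\eqref{emmmmmmm}, and observe that the separate degree $\leq N$ of $\Delta_i$ in each $\beta_j$ together with $|\beta_j|\geqslant M>1$ makes the quotient bounded. The only difference is that you spell out the final numerator estimate, which the paper leaves implicit.
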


\begin{proof}
Using Remark~\ref{remarskjdfsdfsdfiii}(b), we can assume that $ B_k=1 $.

By Lemma~\ref{le:Bezout for this case}, the coefficients of $ R $ are quotients $ \Delta_i/\det{\mathfrak{S}} $, where $ \Delta_i $ is a polynomial, again with coefficients depending on $ A $ and $ K $, of separate degree at most $ N $ in each of the roots $ \beta_1, \dots, \beta_k $. 
On the other hand, the definition of $M$ from \eqref{emmmmmmm} implies that 
\begin{align*}
|\det{\mathfrak{S}}|  =|A(\beta_1)\cdots A(\beta_k)|
\geqslant \frac{1}{2^K}|\beta_1|^N\dots |\beta_k|^N .
\end{align*}
From here it follows that  $|\Delta_i|/|\det{\mathfrak{S}}|\le C$ for some $ C > 0$ depending only on $ A $ and $ K $.
\end{proof}

The next lemma is central to the proof of Theorem \ref{th:main estimate}.

\begin{Lemma}\label{le:|w_i-w_j|le rho}
Let $B\in \P_K$ with $\|B\| \leq 1$,  $\deg B=k\le K$, and $B$ has $k$ distinct zeros $\beta_1, \beta_2, \ldots, \beta_k$.
If $ |\beta_i-\beta_j|\le \rho $ for all $1 \leq i, j \leq k$, then there is a $ C>0 $, depending only on $ A$ and $ K $, such that 
$$ \|R\|\le \frac{C}{\delta^{\max m_j}}.$$
\end{Lemma}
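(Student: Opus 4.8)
The plan is to reduce to the case where all roots $\beta_i$ are clustered near a single point $\beta_0$, and then to distinguish two sub-cases according to whether $\beta_0$ is close to some zero $\alpha_{j_0}$ of $A$ or stays away from all zeros of $A$. Since the $\beta_i$ are within distance $\rho$ of each other, fix $\beta_0 := \beta_1$; all the $\beta_i$ lie in the ball $\{|z-\beta_0|\le\rho\}$. If $\beta_0 \notin \bigcup_j \mathfrak B_j$ (or more precisely, if the whole cluster avoids the slightly larger balls $\mathfrak B_j$), then by \eqref{eq:**} we get $|A(\beta_i)| \ge \eta$ for every $i$, hence $|\det\mathfrak S| = |A(\beta_1)\cdots A(\beta_k)| \ge \eta^k \ge \eta^K$. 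Normalizing $B_k = 1$ via Remark~\ref{remarskjdfsdfsdfiii}(b), the coefficients of $R$ are $\Delta_i/\det\mathfrak S$ by Lemma~\ref{le:Bezout for this case}, where each $\Delta_i$ is a polynomial in the $\beta_i$'s of separate degree at most $N$ with coefficients depending only on $A$; but the $\beta_i$ lie in a bounded region (note $|\beta_i|\le M$ in this sub-case, since $|\beta_i|\ge M$ would force $|A(\beta_i)|$ large but that is compatible — actually one uses that either $|\beta_i|$ is bounded or one is in the setting of Lemma~\ref{le:w_i large}; the cleanest route is to intersect with $\{|z|\le M\}$), so $|\Delta_i| \le C(A,K)$, giving $\|R\| \le C/\eta^K \le C$, which is even better than needed since $\delta < 1$.

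The substantive sub-case is when the cluster $\{\beta_1,\dots,\beta_k\}$ lies within distance $\rho$ of some $\alpha_{j_0}$, so all $\beta_i \in \{|z-\alpha_{j_0}|\le 2\rho\}$ and \eqref{eq:*} applies: $|A(\beta_i)| \ge c_{j_0}|\beta_i - \alpha_{j_0}|^{m_{j_0}}$. Here I would use the interpolation description of $R$ from Corollary~\ref{co:bezout by interpolation}: $R$ is the unique polynomial in $\P_{K-1}$ with $R(\beta_i)A(\beta_i) = 1$ for each $i$ (the $\beta_i$ are simple), so $R(\beta_i) = 1/A(\beta_i)$. Thus $\|R\|'$, the sup over $|z|=1$, is controlled via the Lagrange formula $R(z) = \sum_i R(\beta_i)\delta_i(z)$ — but the $\delta_i$ blow up when the $\beta_i$ coalesce, so Lagrange interpolation on nearly-equal nodes is exactly the wrong tool. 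Instead, the idea is to peel off the largest-multiplicity singularity: write $A(z) = (z-\alpha_{j_0})^{m_{j_0}} \tilde A(z)$ with $\tilde A$ non-vanishing on a neighborhood of the cluster, so $|\tilde A| \asymp 1$ there. Then $R(\beta_i) = \frac{1}{(\beta_i-\alpha_{j_0})^{m_{j_0}}\tilde A(\beta_i)}$. The point is that one should compare $R$ with the explicit "model" solution: by the structure of Example~\ref{ex:elementary}, and by the division Lemma~\ref{le:division by root}, I expect to factor the problem so that the $\delta^{-\max m_j}$ comes out of the single deepest zero, with everything else contributing a bounded factor via the norm-equivalence constant $D$ applied $\le K$ times.

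Concretely, the key steps in order: (1) Reduce to $B_k=1$ and to the cluster lying in $\{|z|\le M\}$. (2) Split on whether the cluster meets $\bigcup \mathfrak B_j$; dispose of the "far" case by the determinant lower bound $\eta^K$ as above. (3) In the "near $\alpha_{j_0}$" case, use $R(\beta_i) = 1/A(\beta_i)$ and the Hermite/Lagrange decomposition $R = \sum_i R_i$ from Corollary~\ref{co:bezout by interpolation}(b), but control things by passing to the polynomial $P(z) := R(z)\prod_{i}(z-\beta_i)$ or better, estimate $\|R\|$ through $\|R\|'$ or $\|R\|''$ and a contour-integral (Hermite remainder) representation $R(z) = \frac{1}{2\pi i}\oint \frac{1}{A(\zeta)} \cdot \frac{\text{(stuff)}}{\prod(z-\beta_i) \text{ vs } \prod(\zeta-\beta_i)}\,d\zeta$ over a fixed circle enclosing the cluster but no $\alpha_j$ other than $\alpha_{j_0}$ — on that fixed circle $|A(\zeta)|^{-1} \le$ const, and the $\beta_i$-dependence is uniformly bounded, so $\|R\|'' \le C$?? — no: that cannot be right because $R$ genuinely blows up. (4) So instead: isolate the blow-up by writing the Hermite data as $R(\beta_i)A(\beta_i)=1$, factor $A = (z-\alpha_{j_0})^{m_{j_0}}\tilde A$, and observe the minimal $R$ for data "$R(\beta_i) = 1/\tilde A(\beta_i) \cdot (\beta_i-\alpha_{j_0})^{-m_{j_0}}$" is, up to the bounded interpolation operator onto $\P_{K-1}$ evaluated against $\tilde A^{-1}$ (a bounded operation since $\tilde A^{-1}$ is analytic near the cluster), essentially interpolation of the function $(z-\alpha_{j_0})^{-m_{j_0}}$; this last function, interpolated at $k\le K$ points in $\{|z-\alpha_{j_0}|\le\rho\}$, has coefficient norm $O(\delta_0^{-m_{j_0}})$ where $\delta_0 := \min_i|\beta_i - \alpha_{j_0}|$, and since $|A(\beta_i)| \ge c_{j_0}\delta_0^{m_{j_0}} \ge$ const$\cdot\delta^{?}$ forces $\delta_0^{m_{j_0}} \gtrsim \delta$... wait, we need $\delta_0 \gtrsim \delta^{1/m_{j_0}}$, i.e. $\delta_0^{-m_{j_0}} \lesssim \delta^{-1}$, but hypothesis (2) of Theorem~\ref{th:main estimate} says $|B(\alpha_j)|\ge\delta$, not something about $A(\beta_i)$; so one instead uses $|B(\alpha_{j_0})| = |B_k|\prod_i|\alpha_{j_0}-\beta_i| \ge \delta$, giving $\prod_i|\alpha_{j_0}-\beta_i| \ge \delta$, hence $\delta_0 = \min_i|\alpha_{j_0}-\beta_i| \ge \delta/(\text{diam})^{k-1} \gtrsim \delta$ — so actually $\delta_0 \gtrsim \delta$, and $\delta_0^{-m_{j_0}} \le C\delta^{-m_{j_0}} \le C\delta^{-\max m_j}$, which is the claimed bound.

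The main obstacle is Step (4): making rigorous that the minimal interpolant $R$ of the data $1/A(\beta_i)$ at clustered nodes has coefficient norm $\lesssim \delta_0^{-m_{j_0}}$, i.e. that no extra blow-up beyond the single worst pole occurs. I expect to handle this by the explicit decomposition $R = \sum_i R_i$ of Corollary~\ref{co:bezout by interpolation}(b), bounding each $R_i$ using Lemma~\ref{le:division by root} applied repeatedly (each division by a linear factor costs only the fixed constant $D$), together with the elementary interpolation of $(z-\alpha_{j_0})^{-m_{j_0}}$ on the small ball — for which one can write an explicit contour-integral formula on the circle $|z-\alpha_{j_0}|=2\rho$, where $|(z-\alpha_{j_0})^{-m_{j_0}}| = (2\rho)^{-m_{j_0}}$ is bounded, BUT whose Hermite-interpolation remainder kernel $\prod_i(z-\beta_i)/\prod_i(\zeta-\beta_i)$ has denominator as small as $\delta_0^{k}$... no, on $|\zeta-\alpha_{j_0}|=2\rho$ we have $|\zeta-\beta_i|\ge 2\rho-\rho=\rho$, so that denominator is $\ge \rho^k$, bounded below; the numerator for $|z|=1$ is bounded above; so in fact $\|R\|' \le C\rho^{-k}(2\rho)^{-m_{j_0}} \le C$?!? — again this wrongly predicts boundedness. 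The resolution, and the genuinely delicate point, is that interpolating $1/A$ — whose pole of order $m_{j_0}$ sits at $\alpha_{j_0}$, a distance only $\delta_0\to 0$ from the nodes — does force the interpolant to have a coefficient of size $\sim\delta_0^{-m_{j_0}}$, and the clean way to see it is to take the contour in the Hermite remainder formula to be a small circle $|\zeta-\alpha_{j_0}| = \delta_0/2$ that separates $\alpha_{j_0}$ from the $\beta_i$: there $|A(\zeta)|^{-1} \le C\delta_0^{-m_{j_0}}$ (from \eqref{eq:*}) while the remainder kernel is $O(1)$ since $|z-\beta_i|$ is bounded and $|\zeta-\beta_i|\ge \delta_0 - \delta_0/2 = \delta_0/2$ — wait that denominator is again tiny, $\sim\delta_0^k$. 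One must pick the contour carefully — enclosing $\alpha_{j_0}$ but not the $\beta_i$, at a distance $\gtrsim\delta_0$ from the $\beta_i$ while still $\lesssim\delta_0$ (in modulus of $A^{-1}$) from $\alpha_{j_0}$ — which is possible precisely because the $\beta_i$ are clustered away from $\alpha_{j_0}$ at the common scale $\delta_0$; this balancing is the crux, and then combined with $\delta_0\gtrsim\delta$ (from $|B(\alpha_{j_0})|\ge\delta$) it yields $\|R\| \le D\|R\|'' \le C/\delta_0^{m_{j_0}} \le C/\delta^{\max m_j}$.
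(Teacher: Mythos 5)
Your reduction to $B_k=1$, the dichotomy on whether the cluster meets $\bigcup_j \mathfrak B_j$, and the disposal of the ``far'' case via $|\det\mathfrak S|\ge\eta^K$ essentially match the paper (the reduction to bounded $|\beta_i|$ via Lemma~\ref{le:w_i large} and Vi\`ete needs to be stated cleanly, but that is cosmetic). The genuine gap is in your step (4), the case where the cluster sits near some $\alpha_{j_0}$: the quantitative claim on which everything rests --- that the minimal interpolant of $1/A$ at the nodes $\beta_i$ has coefficient norm $O(\delta_0^{-m_{j_0}})$, where $\delta_0=\min_i|\beta_i-\alpha_{j_0}|$ --- is false. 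Take $A(z)=z^2$ and $B(z)=(z-\epsilon)(z-2\epsilon)$: here $\delta_0=\epsilon$ and $\delta_0^{-m_{j_0}}=\epsilon^{-2}$, but the linear interpolant of $1/z^2$ at $\epsilon,2\epsilon$ has leading coefficient $-3/(4\epsilon^{3})$, so $\|R\|\asymp\epsilon^{-3}$. (The lemma is still satisfied because $\delta=|B(0)|=2\epsilon^{2}$, so $\delta^{-\max m_j}\asymp\epsilon^{-4}$.) The correct order of growth when all $k$ nodes sit at distance $\asymp\delta_0$ from a pole of order $m$ is $\delta_0^{-m-k+1}$, which is also what your ``balanced'' contour produces once you account for the factor $\prod_i|\zeta-\beta_i|^{-1}\asymp\delta_0^{-k}$ on that contour; your own repeated ``this wrongly predicts boundedness'' flags are symptoms of this unresolved bookkeeping. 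Moreover, by passing from $\prod_i|\alpha_{j_0}-\beta_i|\ge\delta$ to the single-factor bound $\delta_0\gtrsim\delta$ you discard exactly the information that saves the exponent.

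The repair is to keep the product and stay with Cramer's rule. With $B_k=1$ and all $\beta_i$ in $\{|z-\alpha_{j_0}|\le 2\rho\}$, inequality \eqref{eq:*} gives
\[
|\det\mathfrak S|=\prod_{i=1}^{k}|A(\beta_i)|\ \ge\ c_{j_0}^{k}\prod_{i=1}^{k}|\beta_i-\alpha_{j_0}|^{m_{j_0}}=c_{j_0}^{k}\,|B(\alpha_{j_0})|^{m_{j_0}}\ \ge\ c_{j_0}^{K}\,\delta^{m_{j_0}},
\]
and then the same numerator bound $|\Delta_i|\le CM_1^{N}$ that you already invoke in the ``far'' case yields $\|R\|\le CM_1^{N}/(c_{j_0}^{K}\delta^{m_{j_0}})$. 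This is the paper's proof of the lemma; no interpolation, contour integral, or use of Lemma~\ref{le:division by root} is needed here --- those tools enter only later, in the induction handling roots that are not clustered.
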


\begin{proof}
If $ |\beta_i|\geqslant M $ for all $1 \leq i \leq k$, apply Lemma~\ref{le:w_i large} to obtain the conclusion.

 Otherwise, there is some $1 \leq j \leq k$ with $ |\beta_j|<M $, and then the assumption of the lemma implies that $ |\beta_i|\le M+\rho $ for all $1 \leq i \leq k$.
 Vi\`ete's formulas ensure there is some $ M_1>0 $, depending only on $A$ and $K$,  such that $ |B_i/B_k|\le M_1 $ for all $1 \leq i \leq k $. Recall from \eqref{56tfgcvbnklIJH} that $B(z) = B_0 + B_1 z + \cdots + B_{k} z^{k}$ and 
 define $$ \widetilde B:=\frac{B}{B_k}.$$ Observe that the leading coefficient of $ \widetilde{B} $ is 1, while the other coefficients of $\widetilde{B}$ have modulus at most $ M_1 $. So condition (a) in the statement of Theorem~\ref{th:main estimate} is replaced by $ \|\widetilde B\|\le M_1 $, while (b) is satisfied by $ \widetilde{B} $, since the condition $\|B\| \leq 1$ implies that  $ |B_k|\le1 $. On the other hand, we know from Remark~\ref{remarskjdfsdfsdfiii}(b) that $ R $ does not change. Therefore, in the rest of the proof of the lemma, we will assume that $ B_k=1 $.

We will consider two cases:

\begin{enumerate}

\item[a)] Suppose that $\beta_i \not \in  \bigcup_j \mathfrak B_j $ for all $1 \leq i \leq k$. As stated in Lemma~\ref{le:Bezout for this case}, the coefficients of $ R $ are obtained as quotients $ \Delta_i/\det{\mathfrak{S}} $.  Since  $ \Delta_i $ is a polynomial of degree at most $ N $, with coefficients depending on $ A $,  in the coefficients of $ B $, which are all in modulus at most $ M_1 $, there is some constant $ C > 0 $ such that 
$ |\Delta_i|\le C M_1^N.$ On the other hand,~\eqref{eq:**} implies that 
$$|\det{\mathfrak{S}}|= \prod_{i=1}^{k}| A(\beta_i)|\geqslant \eta^{k}  \geqslant \eta^{K}.$$ Therefore,
\[
\|R\|\le \frac{CM_1^N}{\eta^K},
\]
with no dependence on $ \delta $.

\item[b)] If a) is not true, then there exists some $ j $ such that all $ \beta_i $ satisfy $|\beta_i-\alpha_j|\le 2\rho  $. By condition~\eqref{eq:*} above, and our earlier assumption that $0 < c_j < 1$ for all $1 \leq j \leq n$, we conclude that 
\begin{align*}
|\det{\mathfrak{S}}| & =|A(\beta_1)\cdots A(\beta_{k})|
\geqslant  c_j^k\prod_{i=1}^{k}|\beta_i-\alpha_j|^{m_j}\\&=  c_j^k|B(\alpha_j)|^{m_j} \geqslant c_j^k  \delta^{m_j} \geqslant c_{j}^{K} \delta^{m_j}.
\end{align*}
For the numerators $ \Delta_i $, we use the same estimate as in case a) to obtain 
\[
\|R\|\le \frac{C M_1^N}{c_j^K\delta^{m_j}}
\qedhere
\]

\end{enumerate}
\end{proof}

We are finally ready for the proof of Theorem~\ref{th:main estimate}.
The coefficients of the polynomials $ R $ and $ S$ are estimated separately. 
\vskip .10in

{\bf Estimating $ S $}.
   From Corollary~\ref{co:bezout by interpolation},
  one can decompose $ S $ as $ S=\sum_{i = 1}^{n} S_i $, where 
  the coefficients of $ S_i $ are rational functions in $ B $ and its derivatives up to $ m_i-1 $ evaluated at $ \alpha_i $, and the denominator of $ S_i $ is $ B(\alpha_i)^{m_i} $. Since $ |B(\alpha_i)|\geqslant\delta $, it follows that  $ |B(\alpha_i)|^{m_i}\geqslant \delta^{m_i} $.
  
  Since for all $0 \leq \ell \leq m_{i} - 1$ and $0 \leq i \leq n$, the linear functionals $q \mapsto q^{(\ell)}(\alpha_i)$ are bounded on $\P_{K}$,  the terms appearing in the numerator can be bounded by a fixed polynomial in the  norm of $ B $, and the latter is at most 1. Thus, $ S $ satisfies the estimate in~\eqref{eq:estimate}.

\vskip .10in

{\bf Estimating $ R $}.  
Here the situation becomes more complicated. 
We will   
  prove the required estimate by induction on $ k=\deg B $. The constant $ C $ will change at each step, but this is not an issue since we only require at most $k \leq  K $ induction steps.

If $ k=0 $, then   $ B=c $ is a constant, and applying condition (ii) at any root of $ A $ (which has degree at least 1) yields that $ |c|\geqslant \delta $.  Then~\eqref{eq:estimate} is satisfied with $R \equiv 0$, $S \equiv 1/c$, and $C = 1$. In fact, in this case, we may replace $ \delta^{\max m_j} $ by $ \delta $.

If $k = 1$, we can apply Lemma \ref{le:|w_i-w_j|le rho} since the condition on the $\beta_j$ is automatically satisfied.
We now pass to  the induction step from $ k-1 $ to $ k $, where $k \geqslant 2$. We will now take some extra care in denoting the $k$ simple roots $\beta_1, \beta_2, \ldots, \beta_k$ of $B$. First, we can assume that $\beta_1$ is the smallest in absolute value; then $|\beta_1| < M$, since otherwise we apply Lemma \ref{le:w_i large} to complete the proof. Second, we can assume there exists a root, that we will call $\beta_k$, such that $|\beta_1 - \beta_k| > \rho/2$. Otherwise, for any $1 \leq i, j \leq k$, 
$$|\beta_i - \beta_j| \leq |\beta_i - \beta_1| + |\beta_1 - \beta_j| \leq \frac{\rho}{2} + \frac{\rho}{2} = \rho$$
in which case we can apply Lemma \ref{le:|w_i-w_j|le rho} to complete the proof. The remaining roots will, of course, be labeled $\beta_2, \ldots, \beta_{k - 1}$. 
%

 Since the roots of $ B $ are simple, Corollary~\ref{co:bezout by interpolation} says that $ R $ is  the unique polynomial of degree at most $ k-1 $ that satisfies $ R(\beta_i)=1/A(\beta_i) $ for $ i=1, \dots, k $.
Applying \eqref{eq:interpolation difference}, a direct calculation shows that 
\begin{equation}\label{eq:int dif applied}
R(z)=\frac{z-\beta_{k}}{\beta_1-\beta_{k}}r(z)- \frac{z-\beta_{1}}{\beta_1-\beta_{k}}s(z),
\end{equation}
where $ r $ and $ s $ are the interpolation polynomials corresponding to the targets $ 1/A(\beta_i) $ and the nodes $ \beta_1, \dots, \beta_{k-1} $ and $ \beta_2, \dots, \beta_k $ respectively. Since $r$ and $s$ have degree $k - 2$, we will apply the induction hypothesis to these polynomials. 
To obtain a bound for the norm of $R $, it is enough to estimate the norm of the two terms in the right hand side of \eqref{eq:int dif applied}. We will do this in detail only for the first term since the bound on the second follows in a similar way.

We separate our discussion into two cases. 

\noindent (a): $ |\beta_k|\le 2M $: Let $$ \widetilde B:= \frac{B}{D(z-\beta_k)} $$ and note that Lemma~\ref{le:division by root} implies that  $\| \widetilde B\|\le \|B\|\le 1 $. On the other hand, since $ |\beta_k-\alpha_i|\le 2M+M=3M $ for all $ i $, 
$$|\widetilde B(\alpha_i)|\geqslant \frac{\delta}{3DM} .$$

Since $r $ is an interpolation polynomial corresponding to the nodes $ \beta_1, \dots, \beta_{k-1} $ and the targets 
$$\frac{1}{A(\beta_1)}, \frac{1}{A(\beta_2)}, \ldots, \frac{1}{A(\beta_{k - 1})},$$ Corollary~\ref{co:bezout by interpolation} implies that $ r$ is the polynomial of degree at most $ k-2 $ that satisfies the B\'ezout equation 
$$ A r+\widetilde B\widetilde S \equiv 1$$ for some $ \widetilde S $ with $ \deg \widetilde S<\deg A $.

Now apply the induction hypothesis to obtain the estimate  \[ \|r\|\le \frac{C}{\left(\frac{\delta}{3DM}\right)^{\max m_j}}=\frac{C(3DM))^{\max m_j}}{\delta^{\max m_j}} .\]
When $ |z|=1 $ we have 
$$ \frac{|z-\beta_k|}{|\beta_1-\beta_k|}\le \frac{(2M+1) 2}{\rho} .$$ Therefore, applying~\eqref{eq:equivalence of norms - operators},
\begin{align*}
\left\|\frac{z-\beta_{k}}{\beta_1-\beta_{k}}r(z)\right\|\ & \le 2 D\frac{2M+1}{\rho}\|r\|\\
& \le 2 \frac{CD(2M+1)(3DM))^{\max m_j}}{\rho\delta^{\max m_j}}. 
\end{align*}
This ends the proof of case (a).

\noindent (b): $ |\beta_k|>2M $.
 Instead of $ \widetilde B $,  we now consider the polynomial 
$$ \widehat B:=\frac{\beta_k B}{2D(z-\beta_k)}  ,$$
and we view  the polynomial $ r $ as satisfying the B\'ezout equation
\[
A r+\widehat B \widehat S \equiv1
\]
for some polynomial $\widehat S $  with $ \deg \widehat S<\deg A $.
For any $ |z|=1 $ we have, remembering that $ M>1 $ and writing 
$$
\frac{\beta_k}{z - \beta_k} = \frac{z}{z - \beta_k} - 1,
$$
that 
\begin{equation}\label{eq:second case}
  \left|\frac{\beta_k}{z-\beta_k}\right|\le \frac{1}{2 M - 1} + 1 =  \frac{2M}{2M-1}\le2.
\end{equation}
Therefore, multiplication by 
$$\frac{\beta_k}{2(z-\beta_k)} $$ is a contraction operator in the norm $ \|\cdot\|' $, and from~\eqref{eq:equivalence of norms - operators} it follows that $ \|\widehat B\|\le 1 $.   On the other hand, since $ |\alpha_i|\le M $ and $ |\beta_k|>2M $, we have 
\[
\left| \frac{\beta_k}{\alpha_i-\beta_k} \right|\geqslant 
\frac{|\beta_k|}{|\beta_k|+M}\geqslant \frac{|\beta_k|}{\frac{3}{2} |\beta_k|} =\frac{2}{3},
\]
whence
\[
|\widehat B(\alpha_i)|=\frac{|\beta_k| |B(\alpha_i)|}{2D|\alpha_i-\beta_k|} 
\geqslant 
\frac{1}{3D}|B(\alpha_i)|\geqslant \frac{\delta}{3D} .
\]

We now apply  the induction hypothesis to obtain the bound
\[ \|r\|\le \frac{C(3D )^{\max m_j}}{ \delta^{\max m_j}}  .\]
Moreover, under the assumptions of this case, since $ |\beta_1|<M $ and $ |\beta_k|\geqslant 2M $, we see that 
$$|\beta_1 - \beta_k|  \geqslant |\beta_k| - M \geqslant \frac{|\beta_k|}{2}$$ and thus 
$$ \frac{1}{|\beta_1-\beta_k|}\leqslant \frac{1}{M} \quad \mbox{and} \quad 
\left| \frac{\beta_k}{\beta_1-\beta_k}  \right|\le 2.$$
It now follows that 
\[
\left| \frac{z-\beta_{k}}{\beta_1-\beta_{k}}  \right|\le \frac{1}{M}+2=\frac{2M+1}{M} \quad \mbox{for all $|z| = 1$}.
\]
 Therefore, applying~\eqref{eq:equivalence of norms - operators} yields 
\begin{align*}
\left\|\frac{z-\beta_{k}}{\beta_1-\beta_{k}}r(z)\right\|& \le D\frac{2M+1}{M}\|r\| \\
&\le  
\frac{CD(2M+1)(3D)^{\max m_j+1}}{M \delta^{\max m_j}}.
\end{align*}
This ends the proof of case (b), concluding the induction argument needed to estimate the coefficients of $ R $. The proof of the theorem is now complete.\qed

	We remind the reader that Example~\ref{ex:elementary}  shows that the exponent in $\delta$ from the estimate \eqref{eq:estimate} is best possible.

\section{Extension to several polynomials}

Extending Theorem~\ref{th:main estimate} to several polynomials involves the \emph{plank theorem} \cite{MR46672}.

\begin{Lemma}\label{le:geometry}
For vectors $\mathbf{v}_1,\mathbf{v}_2,\dots,\mathbf{v}_n$ in a Hilbert space $\mathcal{H}$ that satisfy
$$\|\mathbf{v}_i\|_{\mathcal{H}} \geqslant 1 \quad \mbox{for all $1 \leq i \leq n$,}$$ there exists a unit vector $\mathbf{y}\in \mathcal{H}$ such that 
$$|\langle  \mathbf{v}_i, \mathbf{y}\rangle_{\mathcal{H}} |\geqslant \frac{1}{\sqrt{n}} \quad \mbox{for all $1\leq i\leq n$.}$$
\end{Lemma}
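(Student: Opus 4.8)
The plan is to prove a clean quantitative statement of the ``plank theorem'' type directly, using the averaging argument that underlies Ball's solution, rather than the full strength of Ball's plank theorem. The claim to establish is: if $\mathbf v_1,\dots,\mathbf v_n$ are vectors in a Hilbert space $\mathcal H$ with $\|\mathbf v_i\|_{\mathcal H}\geqslant 1$ for all $i$, then some unit vector $\mathbf y$ satisfies $|\langle\mathbf v_i,\mathbf y\rangle_{\mathcal H}|\geqslant 1/\sqrt n$ for all $i$.

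First I would reduce to the case where all the $\mathbf v_i$ lie in a single $n$-dimensional (real) subspace: replace $\mathcal H$ by the span of $\mathbf v_1,\dots,\mathbf v_n$ and, if the scalar field is $\mathbb C$, pass to the underlying real Hilbert space, noting that $|\langle\mathbf v_i,\mathbf y\rangle|$ can only be controlled from below, so working with the real inner product $\operatorname{Re}\langle\,\cdot\,,\cdot\,\rangle$ and a real unit vector is sufficient. So assume $\mathcal H=\mathbb R^n$ (or a subspace). Next I would normalize: since $\|\mathbf v_i\|\geqslant 1$, it is enough to find a unit $\mathbf y$ with $|\langle \mathbf v_i/\|\mathbf v_i\|,\mathbf y\rangle|\geqslant 1/(\sqrt n\,\|\mathbf v_i\|)$... actually the cleaner route: it suffices to prove the statement when $\|\mathbf v_i\|=1$ for all $i$, because enlarging $\mathbf v_i$ only helps. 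So let $\mathbf u_1,\dots,\mathbf u_n$ be unit vectors; I seek a unit $\mathbf y$ with $|\langle\mathbf u_i,\mathbf y\rangle|\geqslant 1/\sqrt n$ for all $i$.

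The core step is the averaging/pigeonhole argument. Consider the function $F(\mathbf y)=\max_i \langle\mathbf u_i,\mathbf y\rangle^2$ on the unit sphere, and instead of maximizing directly, integrate the ``bad set'' bound. For a random unit vector $\mathbf y$ uniformly distributed on the sphere $S^{n-1}$, one has $\mathbb E\,\langle\mathbf u_i,\mathbf y\rangle^2 = 1/n$ for each $i$ (by symmetry, since $\sum_{i=1}^n\langle\mathbf e_i,\mathbf y\rangle^2=1$ for any orthonormal basis and the value only depends on $\|\mathbf u_i\|$). Hence $\mathbb E\bigl[\sum_{i=1}^n\langle\mathbf u_i,\mathbf y\rangle^2\bigr]=1$, and more to the point $\mathbb E\bigl[n\max_i\langle\mathbf u_i,\mathbf y\rangle^2\bigr]\geqslant\mathbb E\bigl[\sum_i\langle\mathbf u_i,\mathbf y\rangle^2\bigr]=1$, so there exists a point $\mathbf y$ on the sphere with $\max_i\langle\mathbf u_i,\mathbf y\rangle^2\geqslant 1/n$. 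But this only gives one index $i$ with $|\langle\mathbf u_i,\mathbf y\rangle|\geqslant1/\sqrt n$, not all of them — so the naive averaging is not quite enough, and this is exactly where the genuine content (Ball's plank theorem, or the Bang/Ball argument) is needed. The honest plan, then, is to invoke the plank theorem as cited: the complement of the union of the planks $\{\mathbf y:|\langle\mathbf u_i,\mathbf y\rangle|<1/\sqrt n\}$ — each of ``width'' $2/\sqrt n$ — cannot cover the unit ball, because by Ball's theorem a collection of planks covering the ball must have widths summing to at least $2$, whereas here the total width is $n\cdot(2/\sqrt n)=2\sqrt n$... which is $\geqslant 2$, so that bound alone fails too.

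So the main obstacle, and the step I expect to require the most care, is choosing the right normalization so that the plank theorem applies with the correct slack. The fix is to scale: replace each $\mathbf u_i$ by $\mathbf v_i$ and use the plank associated with $\mathbf v_i$ of width $2/(\sqrt n\,\|\mathbf v_i\|)\cdot\|\mathbf v_i\|$... the precise form of Ball's theorem states that if planks $P_i$ of widths $w_i$ cover the unit ball of a Hilbert (or Banach) space then $\sum w_i\geqslant 2$; equivalently, if $\sum w_i<2$ the planks miss a point of the ball, in fact miss a point of the sphere. I would apply this with $P_i=\{\mathbf y:|\langle\mathbf v_i,\mathbf y\rangle|\leqslant \|\mathbf v_i\|/\sqrt n\}$, whose width is $2/\sqrt n$; but to get strict covering failure I instead take the open planks $|\langle\mathbf v_i,\mathbf y\rangle|<\|\mathbf v_i\|/\sqrt n$ and the closed unit ball, and argue that $n$ open planks of width $2/\sqrt n$ — total $2\sqrt n$ — cannot cover the ball unless $n=1$; here I must use the affine (centered) version where all planks pass through the origin, for which the sharp covering bound is different and does give the result, together with the case analysis $n=1$ (trivial, take $\mathbf y=\mathbf v_1/\|\mathbf v_1\|$) separately. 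In short: reduce to real finite dimensions and unit vectors, then quote the centered plank theorem from \cite{MR46672} to conclude that the $n$ symmetric slabs $\{|\langle\mathbf v_i,\cdot\rangle|<\|\mathbf v_i\|/\sqrt n\}$ do not cover the unit sphere, and any uncovered unit vector $\mathbf y$ is the desired one. I would present the first three reductions in full and then cite the plank theorem for the final covering statement, since reproving it is outside the scope of the paper.
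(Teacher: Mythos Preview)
The paper gives no proof of this lemma at all: it is simply stated and attributed to the plank theorem via the citation~\cite{MR46672}. So your final plan---make a few harmless reductions and then cite the plank theorem---is exactly what the paper does, and in that limited sense your proposal matches.

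That said, the reasoning you give on the way to that citation has a real gap that you yourself half-notice but do not resolve. Bang's theorem, as usually stated, says that planks covering the unit ball must have total width at least~$2$. Your slabs $\{\,|\langle \mathbf u_i,\cdot\rangle|<1/\sqrt n\,\}$ have total width $2\sqrt n$, which for $n\geqslant 2$ already exceeds~$2$, so that form of the theorem yields no contradiction. You then appeal to a ``centered plank theorem'' with a putatively sharper bound for slabs through the origin, but no such sharpening exists in general Banach spaces, and you never state precisely what inequality you mean. Likewise, the averaging argument you try first only produces a single good index, as you correctly observe, and cannot be patched into a proof of the full statement.

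What is actually needed is the Hilbert-space (quadratic) version of the plank theorem: if $\mathbf u_1,\dots,\mathbf u_n$ are unit vectors and $t_1,\dots,t_n\geqslant 0$ satisfy $\sum_i t_i^2\leqslant 1$, then there is a unit vector $\mathbf y$ with $|\langle \mathbf u_i,\mathbf y\rangle|\geqslant t_i$ for every~$i$. Taking $t_i=1/\sqrt n$ gives the lemma immediately. This $\ell^2$ form is due to Ball (Invent.\ Math.\ 1991 for the real case; Bull.\ London Math.\ Soc.\ 2001 for the complex case) and does \emph{not} follow from the width-sum inequality alone. Your reductions to a finite-dimensional real space and to unit vectors are fine and worth keeping; after them, cite the $\ell^2$ plank inequality rather than the width-sum version, and drop the discussion of a ``centered'' variant.
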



Here is our extension of Theorem \ref{th:main estimate} to several polynomials.

\begin{Theorem}\label{th:main estimate2}
Let $A \in \P_{N}$ be of the form 
\[
A(z)=\prod_{j=1}^{n}(z-\alpha_j)^{m_k}, 
\]
where $\alpha_1, \alpha_2, \ldots, \alpha_n$ are distinct and 
$N=\sum_{j=1}^{n}m_j\geqslant 1$. Fix $K \in \N$. 
	Then there is a $ C > 0 $, depending only on $ A $ and $K$, such that, if  $ B_1, \dots , B_L\in \P_{K} $ satisfy the conditions
\begin{enumerate}
 \item ${\displaystyle \sum_{j=1}^{L}\|B_j\|^2\le 1}$ and 
 \item ${\displaystyle \sum_{j=1}^{L}|B_j(\alpha_i)|^2\geqslant \delta^2 > 0}$ for all $1 \leq i \leq n$,
 \end{enumerate}
then
there is an $ R\in \P_{K-1}$, and $S_1, \dots, S_L\in \P_{N-1} $, such that
\begin{equation}\label{s7yyyyYuuhhVHJJhh}
RA+S_1B_1+\dots+S_LB_L \equiv 1,
\end{equation}
and 
 \begin{equation}\label{eq:estimate2}
\Big(\|R\|^2 + \sum_{j = 1}^{L} \|S_j\|^2\Big)^{\frac{1}{2}} \le \frac{C}{\delta^{\max m_i}}.
\end{equation}

	\end{Theorem}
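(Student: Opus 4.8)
The plan is to reduce Theorem~\ref{th:main estimate2} to the single-polynomial case (Theorem~\ref{th:main estimate}) by using the plank theorem (Lemma~\ref{le:geometry}) to replace the vector $(B_1,\dots,B_L)$ by a single scalar combination $B:=\sum_j y_j B_j$ that still satisfies a lower bound of the required type at each $\alpha_i$. Concretely, consider the $n$ vectors $\mathbf v_i:=\delta^{-1}\big(B_1(\alpha_i),\dots,B_L(\alpha_i)\big)\in\C^L$; by hypothesis~(ii) each has $\|\mathbf v_i\|_{\C^L}\geqslant 1$, so Lemma~\ref{le:geometry} furnishes a unit vector $\mathbf y=(y_1,\dots,y_L)\in\C^L$ with $|\langle\mathbf v_i,\mathbf y\rangle|\geqslant 1/\sqrt n$ for every $i$. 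Setting $B:=\sum_{j=1}^{L}\overline{y_j}\,B_j\in\P_K$, we get $|B(\alpha_i)|=\delta\,|\langle\mathbf v_i,\mathbf y\rangle|\geqslant \delta/\sqrt n$ for all $i$, and, since $\sum_j|y_j|^2=1$, the Cauchy--Schwarz inequality together with hypothesis~(i) gives $\|B\|\leqslant\sum_j|y_j|\,\|B_j\|\leqslant\big(\sum_j|y_j|^2\big)^{1/2}\big(\sum_j\|B_j\|^2\big)^{1/2}\leqslant 1$.

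Next I would apply Theorem~\ref{th:main estimate} to the pair $(A,B)$, with the lower bound $\delta/\sqrt n$ in place of $\delta$: there is a constant $C_1>0$, depending only on $A$ and $K$, and minimal solutions $R,S_0\in\P_{K-1}\times\P_{N-1}$ (here $\deg B\le K$ so $\deg R\le K-1$, $\deg S_0\le N-1$) with $RA+S_0B\equiv 1$ and
\[
\big(\|R\|^2+\|S_0\|^2\big)^{1/2}\leqslant \frac{C_1}{(\delta/\sqrt n)^{\max m_j}}=\frac{C_1\,n^{(\max m_j)/2}}{\delta^{\max m_j}}.
\]
Now expand $B$ back out: $RA+S_0\sum_{j=1}^{L}\overline{y_j}B_j\equiv 1$, so taking $S_j:=\overline{y_j}\,S_0$ for $1\le j\le L$ gives exactly the identity \eqref{s7yyyyYuuhhVHJJhh}, with $S_j\in\P_{N-1}$ as required. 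For the norm estimate, $\sum_{j=1}^{L}\|S_j\|^2=\sum_{j=1}^{L}|y_j|^2\|S_0\|^2=\|S_0\|^2$, hence
\[
\Big(\|R\|^2+\sum_{j=1}^{L}\|S_j\|^2\Big)^{1/2}=\big(\|R\|^2+\|S_0\|^2\big)^{1/2}\leqslant\frac{C_1\,n^{(\max m_j)/2}}{\delta^{\max m_j}},
\]
and since $n\leqslant N$ is determined by $A$, the factor $n^{(\max m_j)/2}$ is absorbed into a new constant $C=C(A,K)$, yielding \eqref{eq:estimate2}.

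One technical point to watch is that the constant $C$ in Theorem~\ref{th:main estimate} is allowed to depend on $A$ and $K$ only, uniformly over all $B\in\P_K$ satisfying (1)--(2); since our constructed $B$ lies in $\P_K$ and satisfies $\|B\|\le 1$ and $|B(\alpha_i)|\ge\delta/\sqrt n$, this is exactly the regime covered, so no uniformity issue arises. I do not expect a serious obstacle here: the plank theorem is doing all the geometric work, and the rest is bookkeeping with Cauchy--Schwarz and the scaling behaviour of the B\'ezout solutions recorded in Remark~\ref{remarskjdfsdfsdfiii}(b). If one wanted the $S_j$ to be genuinely the ``minimal'' solutions in some canonical sense, a short remark could be added, but the statement only asserts existence, so the construction above suffices.
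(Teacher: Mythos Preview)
Your proof is correct and follows essentially the same route as the paper: apply the plank theorem to the vectors $\mathbf v_i=\delta^{-1}(B_1(\alpha_i),\dots,B_L(\alpha_i))$ to produce a unit $\mathbf y$, form $B=\sum_j\overline{y_j}B_j$, invoke Theorem~\ref{th:main estimate} for the pair $(A,B)$ with lower bound $\delta/\sqrt n$, and then set $S_j=\overline{y_j}S$. The only cosmetic difference is that the paper passes from $\sqrt n$ to $\sqrt N$ before applying Theorem~\ref{th:main estimate}, whereas you absorb $n^{(\max m_j)/2}$ into the constant at the end; both are fine.
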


\begin{proof}
Consider the vectors $ \mathbf{v}_1, \ldots, \mathbf{v}_n \in  \C^L $ defined by 
\[
\mathbf{v}_i=\frac{1}{\delta}(B_1(\alpha_i), B_2(\alpha_i), \dots, B_L(\alpha_i))
\]
and note that 
$ \|\mathbf{v}_i\|_{\C^{L}}\geqslant 1 $ for all $1 \leq i \leq n$. Lemma~\ref{le:geometry} produces a  unit vector $\mathbf{y}=(y_1, \dots, y_L)  \in \C^L$ such that
\[
\frac{1}{\sqrt{n}}\le |\langle \mathbf{v}_i,\mathbf{y} \rangle_{\C^{L}}|=\frac{1}{\delta}
\Big|\sum_{j=1}^L  B_j(\alpha_i)\overline y_j\Big| \quad \mbox{for all $1 \leq i \leq n$.}
\]
If $B(z)=\sum_{j=1}^L\overline y_jB_j(z)$, it follows that 
\[
|B(\alpha_i)|\geqslant \frac{\delta}{\sqrt{n}}\geqslant \frac{\delta}{\sqrt{N}}.
\]
Furthermore, 
\[
\|B\|\le \Big( \sum_{j = 1}^{L}|y_j|^2 \Big)^{\frac{1}{2}} \Big( \sum_{j = 1}^{L}\|B_j\|^2 \Big)^{\frac{1}{2}}\le 1 .
\]
Theorem~\ref{th:main estimate} produces an  $ R\in \P_{K-1}$ and an $S\in \P_{N-1} $ that satisfy the conditions 
\begin{equation}\label{eq:estim several poly}
RA+SB\equiv1 \quad \mbox{and} \quad	(\|R\|^2 +  \|S\|^2)^{\frac{1}{2}} \le \frac{C\sqrt{N}^{\max m_i}}{\delta^{\max m_i}}.
\end{equation}

The B\'ezout identity$ RA+SB\equiv1 $  can be written as
\[
RA+\sum_{j=1}^L\bar y_j S B_j \equiv 1.
\]
Moreover,
if we define $ S_j=\bar y_j S $, 
then \eqref{s7yyyyYuuhhVHJJhh} holds along with 
\[
\|R\|^2 + \sum_{j=1}^L\| S_j \|^2= 
\|R\|^2 + \sum_{j=1}^L |y_j|^2 \|S\|^2=\|R\|^2 + \|S\|^2.
\]
The estimate in \eqref{eq:estimate2} now follows from the estimate in \eqref{eq:estim several poly}.
\end{proof}

\begin{Remark}\label{re:uniqueness}
	In Theorem~\ref{th:main estimate} the polynomials $ R,S $ are uniquely determined by the degree condition from \eqref{degcond}. However, Theorem~\ref{th:main estimate2}  only yields  the existence of {\em some} polynomials $ R $ and  $ S_1, \ldots, S_L$ that satisfy the desired estimates.
\end{Remark}

\section{de Branges--Rovnyak spaces}\label{DR-section}\label{H2Hbbasics}

The second main theorem of this paper (Theorem \ref{th:corona}) extends results from  \cite{MR4363747}  and establishes a corona theorem for the multipliers of certain de Branges--Rovnyak spaces. In fact, this corona theorem is what originally drew us to  investigate coefficient estimates of B\'{e}zout's identity. In this section we present some of the basics of de Branges--Rovnyak spaces \cite{FM1, FM2, Sa}, along with  some additional results
which seem to be interesting
on their own. The next section will contain our corona theorem. 

Denote
$$\operatorname{ball}(H^{\infty}) := \Big\{b \in H^{\infty}: \|b\|_{\infty} = \sup_{z \in \D} |b(z)| \leq 1\Big\}.$$
For $b \in \operatorname{ball}(H^{\infty})$, the \emph{de Branges--Rovnyak space} $\HH(b)$ is the reproducing kernel Hilbert space associated with the positive definite kernel 
\begin{equation}\label{eq:original kernel H(b)}
k^{b}_{\lambda}(z) = \frac{1 - b(z) \overline{b(\lambda)}}{1 - \overline{\lambda} z}, \quad \lambda, z \in \D.
\end{equation}

It is known that $\HH(b)$ is contractively contained in the well-studied Hardy space $H^2$ of analytic functions $f$ on $ \D $ for which 
$$\|f\|_{H^2} := \Big(\sup_{0 < r < 1} \int_{\T} |f(r \xi)|^2 dm(\xi)\Big)^{\frac{1}{2}}<\infty,$$
where $m$ is normalized Lebesgue measure on the unit circle $\T = \{\xi \in \C: |\xi| = 1\}$ \cite{Duren, garnett}. For $f \in H^2$, the radial limit 
$\lim_{r \to 1^{-}} f(r \xi) =: f(\xi)$ exists for $m$-almost every $\xi \in \T$ and
\begin{equation}\label{knnhHarsysu}
\|f\|_{H^2}  = \Big(\int_{\T} |f(\xi)|^2 dm(\xi)\Big)^{\frac{1}{2}}.
\end{equation}
Furthermore, Parseval's theorem says that if $f(z) = \sum_{k = 0}^{\infty} a_k z^k$ belongs to $H^2$, then 
\begin{equation}\label{sumofsq}
\|f\|^{2}_{H^2}  = \sum_{k = 0}^{\infty} |a_k|^2.
\end{equation}
Though $\HH(b)$ is contractively contained in $H^2$, it is generally not closed in the $H^2$ norm. In fact, for the $b$ explored in this section, $\HH(b)$ is dense in $H^2$.

Throughout this section, we will assume that $ b \in \operatorname{ball}(H^{\infty}) $ is a rational function that is not a finite Blaschke product. We exclude  the finite Blaschke products from our discussion since we will  be exploring a corona theorem for the multiplier algebra of $\HH(b)$. When $b$ is a finite Blaschke product, $\HH(b)$ becomes the usual model space $H^{2} \ominus b H^2$ and in that case it is well-known that the multiplier algebra of $\HH(b)$ is formed just by the constant functions \cite{MR3526203}. Thus, when $b$ is a finite Blaschke product, any corona theorem concerning the multipliers of $\HH(b)$ becomes a triviality.

Although, for a general $b \in \operatorname{ball}(H^{\infty})$ the contents of $\HH(b)$ seem mysterious, when $b \in \operatorname{ball}(H^{\infty})$ is a rational function (and not a finite Blaschke product)  the description of $\HH(b)$ is quite explicit. 
For such a $b$ there exists a unique  nonconstant rational function $ a $ with no zeros on $\D$ such that $a(0) > 0$ and $ |a(\xi)|^2+|b(\xi)|^2=1 $ for  all $ |\xi|=1$. This function $a$ is called the \emph{Pythagorean mate} of $ b $. In fact, one can obtain $a$ from the Fej\'{e}r--Riesz theorem  (see \cite{MR3503356}). Let $ \xi_1, \dots, \xi_n $ denote the {\em distinct} roots of $ a $ on $ \T $, with corresponding  multiplicities $ m_1, \dots, m_n $, and define the polynomial $ a_1 $ by 
\begin{equation}\label{eq:definition of a}
a_1(z): =\prod_{j=1}^n (z-\xi_j)^{m_j}.
\end{equation}
Results from  \cite{MR3110499, MR3503356} show that $\HH(b)$ has an explicit description as 
\begin{equation}\label{eq:formula for H(b)}
\HH(b)=a_1H^2 \dotplus \P_{N-1},
\end{equation}
where $ N=m_1+\dots+m_n $ and $\dotplus$ above denotes the algebraic direct sum in that $a_1 H^2 \cap \P_{N - 1} = \{0\}$. Moreover, if $ f\in\HH(b) $ is decomposed with respect to \eqref{eq:formula for H(b)} as 
\begin{equation}\label{uUUiipPPS}
 f=a_1\widetilde{f}+p, \quad  \mbox{where $\widetilde{f}  \in H^2$ and  $p \in \P_{N - 1}$},
 \end{equation}
an equivalent norm on $ \HH(b) $  (to the natural one induced by the positive definite kernel $k_{\lambda}^{b}(z)$ above)  is
\begin{equation}\label{eq:norm in h(b)}
\|a_1\widetilde f+p\|^{2}_{b}:=\|\widetilde{f}\|^2_{H^2}+\|p\|^2_{H^2}.
\end{equation}
 It is important to note that $ \|\cdot\|_b $ is only equivalent to the original norm corresponding to the kernel in~\eqref{eq:original kernel H(b)}, and its scalar product as well as the reproducing kernels and the adjoints of operators defined on $\HH(b)$ will be different. 
 With the norm $\|\cdot\|_{b}$ and the corresponding inner product in mind, we need
  to introduce a new notation for the associated reproducing kernels, different from~\eqref{eq:original kernel H(b)}, namely $ \kk^b_\lambda $ (note the bold face). By the term reproducing kernel we mean that $\kk^{b}_{\lambda} \in \HH(b)$ for all $\lambda \in \D$ and 
 \[
 \langle f, \kk^b_\lambda\rangle_b = f(\lambda) \quad \mbox{for all $ f\in\HH(b) $ and $ \lambda\in\D $.}
 \]
 
 Using \eqref{uUUiipPPS} and the standard estimate that any $g \in H^2$ satisfies 
 \begin{equation}\label{gggbigggooooo}
 |g(z)| \leq \frac{\|g\|^{2}_{H^2}}{1 - |z|^2} \quad \mbox{for all $z \in \D$},
 \end{equation}
 we see that for fixed $1 \leq k \leq n$ and for each $f \in \HH(b)$ we have 
 \begin{equation}\label{q}
 f(\xi_k) = \lim_{r \to 1^{-}} f(r \xi_k) = p(\xi_k),
 \end{equation}
 where $f = a_1 \widetilde{f} + p$ with $\widetilde{f} \in H^2$ and $p \in \P_{N - 1}$. 
  In the spirit of \eqref{gggbigggooooo}, the next lemma (interesting in its own right and useful later) yields more
precise information on the boundary behavior of $\HH(b)$ functions.
In particular, it shows that $\HH(b)$ functions admit
tangential limits in suitable approach regions  at each point $\xi_k$ (see Remark \ref{tangentialapproachregionsj} below). 
 
 \begin{Lemma}\label{8767ytyuhjBHHGBGHUHU}
 For each fixed $1 \leq k \leq n$, there is a $c_k > 0$, depending only on $b$, such that for each $f \in \HH(b), \eta > 0$, and $z \in \D$, we have 
 $$|f(z)|^2 \leq (1 + \eta) |f(\xi_k)|^2 + c_k \Big(1 + \frac{1}{\eta}\Big) \frac{|z - \xi_k|^2}{1 - |z|^2} \|f\|_{b}^{2}.$$ 
 \end{Lemma}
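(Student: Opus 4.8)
The plan is to decompose $f \in \HH(b)$ as $f = a_1 \widetilde{f} + p$ with $\widetilde{f} \in H^2$ and $p \in \P_{N-1}$, exploit the identity $f(\xi_k) = p(\xi_k)$ from \eqref{q}, and control the two pieces separately. First I would write
$$f(z) - f(\xi_k) = f(z) - p(\xi_k) = a_1(z)\widetilde{f}(z) + \big(p(z) - p(\xi_k)\big),$$
so that $|f(z)| \leq |f(\xi_k)| + |a_1(z)\widetilde{f}(z)| + |p(z) - p(\xi_k)|$. The point is that both correction terms carry a factor of $|z - \xi_k|$: the polynomial $a_1$ has a zero of order $m_k \geq 1$ at $\xi_k$, so $|a_1(z)| \leq C|z - \xi_k|$ on $\D$ (since $\D$ is bounded and $m_k \geq 1$); and $p(z) - p(\xi_k)$ is divisible by $z - \xi_k$, so $|p(z) - p(\xi_k)| \leq C|z - \xi_k|\,\|p\|$ for $p \in \P_{N-1}$, using norm equivalence on the finite-dimensional space $\P_{N-1}$ together with Lemma~\ref{le:division by root} (or just directly, since all norms on $\P_{N-1}$ are equivalent and $p \mapsto (p - p(\xi_k))/(z - \xi_k)$ is a fixed linear map).

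Next I would bound $|\widetilde{f}(z)|$ using the standard Hardy space estimate \eqref{gggbigggooooo}, namely $|\widetilde{f}(z)|^2 \leq \|\widetilde{f}\|_{H^2}^2/(1 - |z|^2)$, which gives
$$|a_1(z)\widetilde{f}(z)|^2 \leq C^2 |z - \xi_k|^2 \cdot \frac{\|\widetilde{f}\|_{H^2}^2}{1 - |z|^2}.$$
For the polynomial term, $|p(z) - p(\xi_k)|^2 \leq C^2|z-\xi_k|^2 \|p\|_{H^2}^2$, and since $|z - \xi_k|^2 \leq 4$ while $1 - |z|^2 \leq 1$, we can freely insert the harmless factor $\tfrac{1}{1-|z|^2} \geq 1$ to write $|p(z)-p(\xi_k)|^2 \leq C^2 \frac{|z-\xi_k|^2}{1-|z|^2}\|p\|_{H^2}^2$. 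Adding the two estimates and recalling $\|f\|_b^2 = \|\widetilde{f}\|_{H^2}^2 + \|p\|_{H^2}^2$ from \eqref{eq:norm in h(b)}, I get
$$|f(z) - f(\xi_k)|^2 \leq \big(|a_1(z)\widetilde{f}(z)| + |p(z)-p(\xi_k)|\big)^2 \leq 2C^2 \frac{|z-\xi_k|^2}{1-|z|^2}\|f\|_b^2,$$
using $(a+b)^2 \leq 2a^2 + 2b^2$; call the resulting constant $c_k$ (it depends only on $b$, through $a_1$ and $N$).

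Finally I would convert the bound on $|f(z) - f(\xi_k)|$ into the stated bound on $|f(z)|^2$ via the elementary Young-type inequality
$$|f(z)|^2 = |f(\xi_k) + (f(z) - f(\xi_k))|^2 \leq (1+\eta)|f(\xi_k)|^2 + \Big(1 + \tfrac{1}{\eta}\Big)|f(z) - f(\xi_k)|^2,$$
which holds for every $\eta > 0$ (expand $|a+w|^2 \leq |a|^2 + 2|a||w| + |w|^2$ and apply $2|a||w| \leq \eta|a|^2 + \eta^{-1}|w|^2$). Substituting the estimate for $|f(z)-f(\xi_k)|^2$ yields exactly the claimed inequality. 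There is no serious obstacle here; the only points requiring a little care are (i) justifying $|a_1(z)| \leq C|z-\xi_k|$ uniformly on $\D$ — which follows because $a_1(z)/(z-\xi_k)$ is a polynomial, hence bounded on the compact set $\ov{\D}$ — and (ii) making sure the constant $c_k$ genuinely depends only on $b$ and not on $f$ or $\eta$, which is clear from the construction since all the constants come from $a_1$, from norm equivalences on $\P_{N-1}$, and from \eqref{gggbigggooooo}. I would present it as a short direct computation rather than separating out lemmas.
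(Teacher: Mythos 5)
Your proof is correct and follows essentially the same route as the paper: decompose $f=a_1\widetilde f+p$, use $f(\xi_k)=p(\xi_k)$ to extract a factor of $z-\xi_k$ from the remainder, control it via the pointwise $H^2$ growth estimate and norm equivalence on $\P_{N-1}$, and finish with the Young-type inequality. The only cosmetic difference is that the paper first packages $a_k^{\#}\widetilde f+\frac{p-p(\xi_k)}{z-\xi_k}$ into a single $H^2$ function $f_k$ before applying the growth estimate, whereas you bound the two pieces separately.
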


\begin{proof}
For fixed $1 \leq k \leq n$, remembering from \eqref{eq:definition of a} that $\xi_k$ is a root of $a_1$, we define the polynomial $a^{\#}(z)$
by
$$a_{k}^{\#}(z) := \frac{a_{1}(z)}{z - \xi_k}.$$
Write $f \in \HH(b)$ as $f = a_1  \widetilde{f} + p$ as in \eqref{uUUiipPPS}. By \eqref{q} we have $f(\xi_k) = p(\xi_k)$, and so  
\begin{align*}
f(z) & = (z - \xi_k)  a_{k}^{\#}(z) \widetilde{f}(z)  + (p(z) - p(\xi_k)) + p(\xi_k)\\
& = (z - \xi_k) \Big(a_{k}^{\#}(z) \widetilde{f}(z) + \frac{p(z) - p(\xi_k)}{z - \xi_k}\Big) + p(\xi_k)\\
& = (z - \xi_k) f_{k}(z) + f(\xi_k),
\end{align*}
where 
\begin{equation}\label{dkjfgjj777yy781144}
f_k(z) = a_{k}^{\#}(z) \widetilde{f}(z)  + \frac{p(z) - p(\xi)}{z - \xi_k} \in H^2.
\end{equation}

Given $\eta > 0$, for any $a,  b > 0$ we have 
$$2 a b \leq \eta a^2 + \frac{1}{\eta} b^2$$ and so
\begin{align*}
|f(z)|^2 & \leq |(z - \xi_k) f_{k}(z) + f(\xi_k)|^2\\
& \leq |f(\xi_k)|^2 + |z - \xi_k|^2 |f_{k}(z)|^2 + 2 |(z - \xi_k) f_{k}(z)| |f(\xi_k)|\\
& \leq (1 + \eta) |f(\xi_k)|^2 + \Big(1 + \frac{1}{\eta}\Big) |z - \xi_k|^2 |f_{k}(z)|^2\\
&\leq (1 + \eta) |f(\xi_k)|^2 + \Big(1 + \frac{1}{\eta}\Big) \frac{|z - \xi_k|^2}{1 - |z|^2} \|f_{k}\|^{2}_{H^2}.
\end{align*}
In the last inequality above, note the use of  \eqref{gggbigggooooo}.

To finish the proof, it suffices to show there exists a $c_k > 0$, depending only on $b$ and $k$, such that 
\begin{equation}\label{qq}
\|f_k\|^{2}_{H^2} \leq c_{k} \|f\|_{b}^{2}.
\end{equation}
The definition of $f_k$ from \eqref{dkjfgjj777yy781144} says that 
\begin{align*}
\|f_{k}\|_{H^2}^{2} & \leq 2 \Big(\|a_{k}^{\#} \widetilde{f}\|_{H^2}^{2} + \Big\|\frac{p - p(\xi_k)}{z - \xi_k} \Big\|_{H^2}^{2}\Big)\\
& \leq 2 \Big(\|a_{k}^{\#}\|_{\infty} \| \widetilde{f}\|_{H^2}^{2} + \Big\|\frac{p - p(\xi_k)}{z - \xi_k} \Big\|_{H^2}^{2}\Big).
\end{align*}
Since the map 
$$p(z) \mapsto \frac{p(z) - p(\xi_k)}{z - \xi_k}$$ is a linear transformation from $\P_{N - 1}$ to itself and 
$\P_{N - 1}$ is a finite dimensional space (and hence all norms on $\P_{N - 1}$ are equivalent), this map is continuous and hence there is a constant $\widetilde{c}_k > 0$ such that 
$$\Big\|\frac{p - p(\xi_k)}{z - \xi_k}\Big\|^{2}_{H^2} \leq \widetilde{c}_{k} \|p\|^{2}_{H^2} \quad \mbox{for all $p \in \P_{N - 1}$}.$$
Thus, 
\begin{align*}
\|f_k\|^{2}_{H^2} & \leq 2 ( \|a_{k}^{\#}\|_{\infty}^{2} \|\widetilde{f}\|^{2}_{H^2} + \widetilde{c}_{k} \|p\|^{2}_{H^2})\\
& \leq  c_{k} (\|\widetilde{f}\|^{2}_{H^2} + \|p\|_{H^2}^{2})\\
& = c_{k} \|f\|_{b}^{2},
\end{align*}
where $c_{k}  = 2 \max (\|a_{k}^{\#}\|_{\infty}^2, \widetilde{c}_{k})$. This verifies \eqref{qq} and thus completes the proof. 
\end{proof}

\begin{Remark}\label{tangentialapproachregionsj}
In the spirit of the above proof,
one can write 
$$f(z)=(z-\xi_k)^{m_k}a_k^\dagger(z)\widetilde{f}(z)+p(z),$$ where
$$a_k^{\dagger}(z)=\prod_{j\neq k}(z-\xi_j)^{m_j},$$
to prove that each $f \in \HH(b)$ admits a boundary
limit at $\xi_k$ in the approach regions
$$\Big\{z \in \D: \frac{|z-\xi_k|^{2m_k}}{1-|z|} \leq c\Big\}, \quad c > 1,$$
which are larger than the standard nontangential (Stolz) regions 
$$\Big\{z \in \D: \frac{|z-\xi_k|}{1-|z|} \leq c\Big\}, \quad c > 1.$$
\end{Remark}


Let 
$$\Mult(\HH(b)) := \{\phi \in \HH(b): \phi \HH(b) \subset \HH(b)\}$$
denote the \emph{multiplier algebra} of $\HH(b)$. Standard results for multiplier algebras, true for any reproducing kernel Hilbert space of analytic functions, say that if $\phi \in \Mult(\HH(b))$, then $ \phi\in H^\infty $, and the multiplication operator 
$M_{\phi} f = \phi f$ is bounded on $\HH(b)$ and satisfies 
\begin{equation}\label{7ygGHYGTG6}
M_{\phi}^{*} \kk^{b}_{\lambda} = \overline{\phi(\lambda)} \kk^{b}_{\lambda} \quad  \mbox{for all $\lambda \in \D.$}
\end{equation}

For general $\HH(b)$ spaces, the multiplier algebra $ \Mult(\HH(b)) $ lacks a complete description \cite{MR1098860, MR1254125, MR1614726}.   In  our case, where $b \in \operatorname{ball}(H^{\infty})$ is rational and not a finite Blaschke product, things again become much simpler. Indeed, \cite[Proposition 3.1]{MR3967886} says that 
\begin{equation}\label{mukltiewrlierrrrrs}
\Mult(\HH(b)) = \HH(b) \cap H^{\infty},
\end{equation}
and \eqref{eq:formula for H(b)} implies that $\phi \in \Mult(\HH(b))$ if and only if 
$$\phi = a_1 \widetilde{\phi} + r,\quad \mbox{where $\widetilde{\phi} \in H^2$, $r \in \mathscr{P}_{N - 1}$,  and $a_1 \widetilde{\phi} \in H^{\infty}.$}$$


In particular, it follows easily from~\eqref{uUUiipPPS} that every polynomial is a multiplier of $ \HH(b) $ (this is also a consequence of more general facts from \cite[Ch. IV]{Sa}). Since all norms on the finite dimensional space $ \P_{N-1}$ are equivalent, we fix a $ C_1  > 0$ that  satisfies
\begin{equation}\label{eq:definition of C_1}
\max\{\|p\|, 	\|p\|_{\Mult(\HH(b))}\}\le C_1\|p\|_b = C_1 \|p\|_{H^2}, \quad p \in \P_{N - 1}.
\end{equation}

 \section{A corona theorem for de Branges--Rovnyak  spaces}

Our corona theorem for $\Mult(\HH(b))$ will be stated in terms of column multipliers. 
For a sequence $\Phi =  (\phi_{j})_{j \geqslant 1}$ of functions in $\Mult(\HH(b))$, define  the column multiplier
\begin{equation}\label{eq:big multiplier}
\M_\Phi: \HH(b)\to \bigoplus_{j=1}^\infty \HH(b), \quad \M_{\Phi}  f =  (\phi_j f)_{j\geqslant 1}.
\end{equation}
When $\M_{\Phi}$ is bounded, its adjoint is given by  
$ \M_\Phi^*=(M_{\phi_1}^*,M_{\phi_2}^*, \dots) .$
As is standard, 
$$\bigoplus_{j = 1}^{\infty} \HH(b) : = \Big\{(f_j)_{j \geqslant 1}:  f_j \in \HH(b), \sum_{j = 1}^{\infty} \|f_{j}\|_{b}^{2} < \infty\Big\}$$
with
$$\|(f_j)_{j \geqslant 1}\|_{\bigoplus_{j=1}^\infty \HH(b)} := \Bigg(\sum_{j = 1}^{\infty} \|f_j\|_{b}^{2}\Bigg)^{\frac{1}{2}}$$
(recall the norm $\|\cdot\|_{b}$ on $\HH(b)$ from \eqref{eq:norm in h(b)}).   
The next lemma generalizes~\cite[Lemma 3.2.3]{Luothesis}.
\begin{Lemma}\label{le:charact of multipliers}
	$ \M_\Phi $ is a bounded (column) operator if and only if
	\begin{enumerate}
		\item  ${\displaystyle C_2:= \Big(\sum_{j=1}^\infty \|\phi_j\|_b^2\Big)^{\frac{1}{2}}<\infty}$, and
		\item ${\displaystyle C_3:=\sup_{z\in \D}\Big(\sum_{j=1}^\infty|\phi_j(z)|^2\Big)^{\frac{1}{2}}<\infty}$. 
	\end{enumerate}
Furthermore, 
	$
\max(C_2, C_3)\le 	\|\M_\Phi\|\le \sqrt{2}\max(C_3, C_1C_2),
$ where $C_1$ was defined in \eqref{eq:definition of C_1}.
\end{Lemma}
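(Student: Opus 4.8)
The plan is to prove the characterization of bounded column multipliers $\M_\Phi$ by a direct two-way argument, closely following the structure of the scalar multiplier facts already recorded in the excerpt. First I would prove the easy direction: assume (a) and (b), and show $\M_\Phi$ is bounded with $\|\M_\Phi\| \le \sqrt{2}\max(C_3, C_1 C_2)$. Given $f \in \HH(b)$, write $f = a_1 \widetilde f + p$ with $\widetilde f \in H^2$ and $p \in \P_{N-1}$ as in \eqref{uUUiipPPS}, so that $\|f\|_b^2 = \|\widetilde f\|_{H^2}^2 + \|p\|_{H^2}^2$. For each $j$, I would decompose $\phi_j f$ using the product structure: $\phi_j f = \phi_j(a_1\widetilde f) + \phi_j p$. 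The first piece $\phi_j a_1 \widetilde f$ lies in $a_1 H^2$ (since $\phi_j \in H^\infty$, $a_1 \widetilde f \in H^2$ implies the product is in $H^2$, and it is divisible by $a_1$), so its $\HH(b)$-norm is controlled by $\|\phi_j \widetilde f\|_{H^2} \le \|\phi_j\|_\infty \|\widetilde f\|_{H^2}$; however, since we only have $\|\phi_j\|_b$ in hypothesis (a), I instead split $\phi_j p$ off as a polynomial (not necessarily in $\P_{N-1}$, but its $\HH(b)$-norm is controlled via the multiplier norm of the polynomial factor and \eqref{eq:definition of C_1}). The cleaner route: since $M_{\phi_j}$ is bounded on $\HH(b)$ with $\|M_{\phi_j}\| = \|\phi_j\|_{\Mult(\HH(b))}$, and using \eqref{mukltiewrlierrrrrs} together with \eqref{eq:definition of C_1}-type estimates, bound $\|\phi_j f\|_b$ by a combination of $\|\phi_j\|_b \|f\|_b$ (polynomial/multiplier part) and $\sup_z |\phi_j(z)|$-type quantities. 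Summing the squares over $j$ and invoking (a), (b) with the inequality $\|u+v\|^2 \le 2\|u\|^2 + 2\|v\|^2$ yields the claimed upper bound.

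For the converse direction, assume $\M_\Phi$ is bounded. The lower bound $C_3 \le \|\M_\Phi\|$ follows from the reproducing kernel identity: by \eqref{7ygGHYGTG6}, $\M_\Phi^* \widehat{\kk}^b_\lambda = (\overline{\phi_j(\lambda)} \kk^b_\lambda)_{j\ge 1}$ where $\widehat{\kk}^b_\lambda$ is the kernel placed in a suitable coordinate, so testing $\M_\Phi^*$ on the normalized kernel $\kk^b_\lambda / \|\kk^b_\lambda\|_b$ gives $\big(\sum_j |\phi_j(\lambda)|^2\big)^{1/2} \le \|\M_\Phi^*\| = \|\M_\Phi\|$; taking the supremum over $\lambda \in \D$ gives $C_3 \le \|\M_\Phi\|$. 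The lower bound $C_2 \le \|\M_\Phi\|$ follows by evaluating $\M_\Phi$ on the constant function $\mathbf 1 \in \HH(b)$ (which has $\|\mathbf 1\|_b$ a fixed finite quantity, in fact $\|\mathbf 1\|_b = 1$ since $\mathbf 1 \in \P_{N-1}$ with zero $a_1 H^2$-part): $\|\M_\Phi \mathbf 1\|^2 = \sum_j \|\phi_j\|_b^2 = C_2^2$, and this is $\le \|\M_\Phi\|^2 \|\mathbf 1\|_b^2 = \|\M_\Phi\|^2$. This simultaneously shows (a) holds. Then (b) follows because $C_3 \le \|\M_\Phi\| < \infty$.

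The main obstacle I anticipate is the bookkeeping in the easy (sufficiency) direction: controlling $\|\phi_j f\|_b$ when we only know $\|\phi_j\|_b$ (the $H^2$-type norm of $\phi_j$) rather than its full multiplier norm. The resolution is to use the decomposition $\phi_j = a_1\widetilde{\phi_j} + r_j$ and $f = a_1 \widetilde f + p$, expand the product $\phi_j f$, and observe that all terms containing a factor of $a_1$ land in $a_1 H^2$ with $H^2$-norm controlled by products of $H^2$-norms and $\|\cdot\|_\infty$-norms, while the single purely-polynomial term $r_j p$ must be handled by writing $r_j p = a_1 q_j + s_j$ and absorbing $q_j$ (whose $H^2$-norm is dominated by $\|r_j\|_{H^2}\|p\|_{H^2}$ up to a constant depending only on $b$, by finite-dimensionality of $\P_{N-1}$ and boundedness of polynomial multiplication followed by remainder) — it is precisely here that the constant $C_1$ from \eqref{eq:definition of C_1} enters, producing the $C_1 C_2$ term. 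I would also need the elementary fact that $\|\phi_j\|_\infty$ is dominated by a constant times $\|\phi_j\|_b$ when $\phi_j$ ranges over multipliers — but more economically, one avoids $\|\phi_j\|_\infty$ entirely by noting $|\phi_j(z)|^2 \le \sum_k |\phi_k(z)|^2 \le C_3^2$ pointwise, so the "sup" part of the estimate is uniformly controlled by $C_3$, and Lemma \ref{8767ytyuhjBHHGBGHUHU} or the plain estimate \eqref{gggbigggooooo} converts pointwise control into $H^2$-norm control where needed. Assembling these pieces and squaring-and-summing gives $\|\M_\Phi f\|^2 \le 2 C_3^2 \|f\|_b^2 + 2 C_1^2 C_2^2 \|f\|_b^2$, which is the stated bound.
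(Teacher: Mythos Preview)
Your approach matches the paper's: test on $1$ for $C_2$, on kernels for $C_3$, and for sufficiency split $\phi_j f = \phi_j(a_1\widetilde f) + \phi_j p$. Two points need fixing.

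First, your adjoint formulation is inverted: $\M_\Phi^*$ maps $\bigoplus_j \HH(b)\to\HH(b)$, so it cannot send a single kernel to a sequence. The paper instead applies $\M_\Phi^*$ to a finitely supported sequence $(\gamma_j\kk^b_z)_j$, obtaining $\big(\sum_j\gamma_j\overline{\phi_j(z)}\big)\kk^b_z$, and then optimizes over unit $(\gamma_j)$ to get $(\sum_j|\phi_j(z)|^2)^{1/2}\le\|\M_\Phi\|$. Second, and more to the point, your final estimate $\|\M_\Phi f\|^2\le 2C_3^2\|f\|_b^2+2C_1^2C_2^2\|f\|_b^2$ only yields $\|\M_\Phi\|\le\sqrt{2}\,(C_3^2+C_1^2C_2^2)^{1/2}$, which is strictly weaker than the stated $\sqrt{2}\max(C_3,C_1C_2)$. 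The sharpening comes from observing that the two sums are controlled by \emph{complementary} pieces of $\|f\|_b^2=\|\widetilde f\|_{H^2}^2+\|p\|_{H^2}^2$, not the full norm twice: one has $\sum_j\|\phi_j\widetilde f\|_{H^2}^2=\int_\T\big(\sum_j|\phi_j|^2\big)|\widetilde f|^2\,dm\le C_3^2\|\widetilde f\|_{H^2}^2$, while for the polynomial piece one treats $p$ (not $\phi_j$) as the multiplier, so $\|\phi_j p\|_b\le\|p\|_{\Mult(\HH(b))}\|\phi_j\|_b\le C_1\|p\|_{H^2}\|\phi_j\|_b$ and hence $\sum_j\|\phi_j p\|_b^2\le C_1^2C_2^2\|p\|_{H^2}^2$. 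With this, the double decomposition of both $\phi_j$ and $f$ that you propose in your ``main obstacle'' paragraph is unnecessary; only $f$ needs to be split.
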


\begin{proof}
	Suppose that $ \M_\Phi $ is a bounded operator. Since $ 1\in \HH(b) $ and~\eqref{eq:norm in h(b)} shows that $ \|1\|_b=1 $, we conclude that 
	\[
	C_2^2=\sum_{j=1}^\infty\|\phi_j\|_b^2
	=\|\M_\Phi 1\|^2_{\bigoplus_{j=1}^\infty \HH(b)}\le \|\M_\Phi\|^2 \cdot \|1\|_b^2=\|\M_\Phi\|^2.
	\]
	This proves (a).
	To prove (b), let $N \in \N$ and $ (\gamma_j)_{j\geqslant 1}$ be a complex sequence such that $\gamma_j = 0$ when $j \geqslant N + 1$. 
	For every $ z\in \D $ and $N \in \N$, it follows from \eqref{7ygGHYGTG6} that 
	\[
	\M_\Phi^*((\gamma_j \kk^b_z)_{j \geqslant 1})=
	\sum_{j = 1}^{N}
	\M_{\phi_j}^*(\gamma_j \kk^b_z)	=	\Big(\sum_{j = 1}^{N}\gamma_j\overline{\phi_j(z)}\Big)\kk^b_z,
	\]
	and so 
	\begin{align*}
	\Big|\sum_{j = 1}^{N}\gamma_j\overline{\phi_j(z)}  \Big|\|\kk^b_z\|_{b} & \le \|	\M_{\Phi}^*\|\|(\gamma_j \kk^b_z)_{j \geqslant 1}\|_{\bigoplus_{j=1}^{N} \HH(b)}\\
	& = \|	\M_{\Phi}\|\Big(\sum_{j = 1}^{N}|\gamma_j|^2\Big)^{\frac{1}{2}}\|\kk^b_z\|_b.
	\end{align*}
	Therefore, 
	\[
		\Big|\sum_{j = 1}^{N}\gamma_j\overline{\phi_j(z)}  \Big|\le \|	\M_{\Phi}\| \Big(\sum_{j = 1}^{N}|\gamma_j|^2\Big)^{\frac{1}{2}} \quad \mbox{for all $N \in \N$}.
	\]
	Since the inequality above is true for any $ (\gamma_j)_{j\geqslant 1}$,  the Riesz representation theorem implies that 
	\[
	\sum_{j=1}^N |\phi_j(z)|^2\le \|\M_\Phi\|^2 \quad \mbox{for all $z \in \D$}.
	\]
The inequality above is true for all $N$, 
	which proves (b).
	
	Conversely, assume that conditions (a) and (b) are satisfied. For any $ f\in \HH(b) $ we have 
	\[
\|	\M_\Phi f\|^2_{\bigoplus_{j=1}^\infty \HH(b)}=\sum_{j=1}^\infty \|\phi_j f\|_{b}^2.
	\]
	From \eqref{uUUiipPPS} we can write  $ f=a_1\widetilde{f}+p $, with $ \widetilde{f}\in H^2 $ and $ p\in \P_{N-1} $. The definition of the norm on $\HH(b)$ from \eqref{eq:norm in h(b)} yields
	\begin{align*}
	\|\phi_j f\|_b^2
	& =\|a_1\widetilde{f}\phi_j +\phi_j p\|^2_b\\
	& \le 2(\|a_1\widetilde{f}\phi_j\|^2_b +\|\phi_j p\|^2_b  )\\
	&=
	2(\|\widetilde{f}\phi_j\|^2_{H^2} +\|\phi_j p\|^2_b  ).
	\end{align*}
	Hence,
	\begin{equation}\label{eq:est multiplier}
			\|\M_\Phi f\|_{\bigoplus_{j=1}^\infty \HH(b)}^2\le 
		2\Big( \sum_{j=1}^\infty \|\widetilde{f}\phi_j\|^2_{H^2}+ \sum_{j=1}^\infty \|\phi_j p\|_b^2
		\Big).
	\end{equation}

	To estimate the first sum on the right hand side of \eqref{eq:est multiplier}, we can use \eqref{knnhHarsysu} and Fubini's theorem to obtain
	\begin{align}
	 \sum_{j=1}^\infty \|\widetilde{f}\phi_j\|^2_{H^2} & =
	  \sum_{j=1}^\infty\int_\T |\phi_j(\zeta)|^2 |\widetilde{f}(\zeta)|^2\, dm(\zeta) \nonumber \\  
	  & = \int_\T  \sum_{j=1}^\infty|\phi_j(\zeta)|^2 |\widetilde{f}(\zeta)|^2\, dm(\zeta). \label{xxxoo}
	\end{align}
	From the facts that  $ \phi_j(r \zeta)\to \phi_j(\zeta) $ for almost every $\zeta \in \T$ as $r \to 1^{-}$, and
	$$\sum_{j = 1}^{N} |\phi_{j}(r \xi)|^2 \leq C_{3}^{2} \quad \mbox{for all $N \in \N$,}$$ one sees that 
	$$\sum_{j = 1}^{N} |\phi_{j}(\xi)|^2 \leq C_{3}^{2} \quad \mbox{for almost every $\xi \in \T$ and every $N \in \N$}.$$
	Now let $N \to \infty$ to conclude that
	\[
	\sum_{j=1}^\infty|\phi_j(\xi)|^2 \leq C_3^2 \quad \mbox{for almost every $\xi \in \T$}.
	\]
	Thus, continuing the estimate from \eqref{xxxoo}, we obtain 
	\[
	 \sum_{j=1}^\infty \|\widetilde{f}\phi_j\|^2_{H^2}\le C_3^2
	 \int_\T    |\widetilde{f}(\zeta)|^2\, dm(\zeta)=C_3^2\|\widetilde{f}\|_{H^2}^2.
	\]
	
	To estimate the second term on the right hand side of~\eqref{eq:est multiplier}, observe that 
	\[
	\|\phi_j p\|_b \le \|p\|_{\Mult(\HH(b))}\|\phi_j\|_b\le C_1\|p\|_{H^2}\|\phi_j\|_b,
	\]
	where $ C_1 $ is defined by~\eqref{eq:definition of C_1}.
	Therefore,
	\[
	\sum_{j=1}^\infty \|\phi_j p\|_b^2
	\le C_1^2\|p\|_{H^2}^2 \Big(	\sum_{j=1}^\infty \|\phi_j \|_b^2  \Big)=C_1^2C_2^2\|p\|_{H^2}^2.
	\]
	It follows from~\eqref{eq:est multiplier} that 
	\[
		\|\M_\Phi f\|_{\bigoplus_{j=1}^\infty \HH(b)}^2\le 2\big(C_3^2\|\widetilde{f}\|_{H^2}^2+C_1^2C_2^2\|p\|_{H^2}^2\big).
	\]
	Since $ \|f\|_b^2=\|\widetilde{f}\|_{H^2}^2+\|p\|_{H^2}^2 $ (see \eqref{eq:norm in h(b)}), we see that
	\[
		\|\M_\Phi f\|_{\bigoplus_{j=1}^\infty \HH(b)}^2\le 2\max(C_3^2, C_1^2C_2^2) \|f\|_b^2.
	\]
	Therefore, $M_{\Phi}$ is bounded and  
	$
	\|\M_\Phi\|\le \sqrt{2}\max(C_3, C_1C_2),
	$
	which finishes the proof of the lemma.
\end{proof}

Here is the  second main result of this paper, a corona theorem for $\Mult( \HH(b)) $. We remind the reader that for rational
$b\in \operatorname{ball}(H^{\infty})$ (and not a finite Blaschke product), 
there exists a Pythagorean mate $a$ to which we can associate
the polynomial $a_1(z)=\prod_{j=1}^n(z-\xi_j)^{m_j}$
as explained in (5.4).

\begin{Theorem}\label{th:corona}
	Let $ b \in \operatorname{ball}(H^{\infty}) $ be rational, but not a finite Blaschke product. Suppose that $\Phi = (\phi_j)_{j \geqslant 1}$ is a sequence in $ \Mult(\HH(b)) $ that satisfies the conditions
	\begin{itemize}
		\item[(i)] ${\displaystyle \|\M_{\Phi}\|\le 1}$, and
		\item[(ii)] ${\displaystyle 0<\delta^2\le \sum_{j=1}^\infty|\phi_j(z)|^2} $ for all  $ z\in\D $.
		
	\end{itemize}
Then there is a sequence $\B = (b_j)_{j \geqslant 1}$ in $\Mult(\HH(b)) $ such that
\begin{itemize}
	\item[(a)] ${\displaystyle  \sum_{j=1}^\infty\phi_j(z)b_j(z)=1}$ for all $ z\in \D $, and 
	
	\item[(b)] ${\displaystyle \|\M_\B\| \le \frac{C}{\delta^{\max m_j}} \Big( 1+\frac{1}{\delta^2}  \log \frac{1}{\delta} \Big)}$,
	
\end{itemize}
where $ C  > 0$ depends only on $ b $.
\end{Theorem}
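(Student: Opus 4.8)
The plan is to split the corona problem for $\Mult(\HH(b))$ into two pieces and glue them using the decomposition $\HH(b)=a_1H^2\dotplus\P_{N-1}$ from \eqref{eq:formula for H(b)}: a \emph{polynomial} B\'ezout problem at the boundary singularities $\xi_1,\dots,\xi_n$ of $a_1$, handled by Theorem~\ref{th:main estimate2} applied with $A=a_1$, and an \emph{$H^\infty$} corona problem on the rest of $\D$, handled by Carleson's theorem in its infinite-sequence form together with Uchiyama's estimate \eqref{uchi}. The obstacle to overcome is that a naive $H^\infty$ corona solution $(g_j)$ need not multiply $\HH(b)$; the repair is to use the polynomial solution to absorb the bad behaviour at the $\xi_k$ into a residual $a_1u$ that is simultaneously an $H^\infty$ function and an element of $a_1H^2$, so that it can then be dissolved by multiplying the $g_j$ by $a_1u$ without leaving $\Mult(\HH(b))$.

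First I would do the polynomial step. By Lemma~\ref{le:charact of multipliers}, hypothesis (i) gives $\|\phi_j\|_\infty\le1$, $\sum_j\|\phi_j\|_b^2\le1$, and $\sum_j|\phi_j(z)|^2\le1$ on $\D$. Decomposing $\phi_j=a_1\widetilde{\phi_j}+r_j$ as in \eqref{uUUiipPPS}, formula \eqref{eq:norm in h(b)} yields $\|r_j\|_{H^2}\le\|\phi_j\|_b$, so $\sum_j\|r_j\|^2$ is controlled by the equivalence of norms on $\P_{N-1}$; and by \eqref{q}, $r_j(\xi_k)=\phi_j(\xi_k)$. Applying Lemma~\ref{8767ytyuhjBHHGBGHUHU} to each $\phi_j$, summing over $j$, using $\sum_j\|\phi_j\|_b^2\le1$, and letting $z\to\xi_k$ radially should force $\sum_j|\phi_j(\xi_k)|^2\ge\delta^2$, hence $\sum_j|r_j(\xi_k)|^2\ge\delta^2$ for every $k$. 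Since these series converge, I truncate to indices $j\le L$ keeping $\sum_{j\le L}|r_j(\xi_k)|^2\gtrsim\delta^2$ for all $k$, rescale the $r_j$ to have $\ell^2$-sum of norms at most $1$, and feed them into Theorem~\ref{th:main estimate2} with $A=a_1$. This produces polynomials $R$ and $S_1,\dots,S_L$ with $Ra_1+\sum_{j\le L}S_jB_j\equiv1$ and coefficient norms bounded by $C\delta^{-\max m_j}$; absorbing the rescaling constants, I obtain $\sigma_1,\dots,\sigma_L\in\P_{N-1}$ (and $\sigma_j:=0$ for $j>L$) with $Ra_1+\sum_j\sigma_jr_j\equiv1$ and $\|R\|+\sum_j\|\sigma_j\|\le C\delta^{-\max m_j}$, $C$ depending only on $b$.

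Next I would glue. Substituting $r_j=\phi_j-a_1\widetilde{\phi_j}$ turns the identity into $\sum_j\sigma_j\phi_j+a_1u\equiv1$ with $u:=R-\sum_{j\le L}\sigma_j\widetilde{\phi_j}\in H^2$. The key point: $\sum_j\sigma_j\phi_j$ is a \emph{finite} sum of elements of $\Mult(\HH(b))\subset H^\infty$, so the residual $a_1u=1-\sum_j\sigma_j\phi_j$ lies in $H^\infty$; being also in $a_1H^2$, it belongs to $\Mult(\HH(b))$, and its $H^2$-part being $u$ we get $\|u\|_{H^2}=\|a_1u\|_b\le C\delta^{-\max m_j}$ and $\|a_1u\|_\infty\le C\delta^{-\max m_j}$ (using \eqref{eq:definition of C_1} and Cauchy--Schwarz on the finite sum). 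Now I invoke the $H^\infty$ corona theorem for infinite sequences with the Uchiyama bound: from $\delta^2\le\sum_j|\phi_j(z)|^2\le1$ on $\D$ I get $g_j\in H^\infty$ with $\sum_j\phi_jg_j\equiv1$ and $\sup_z\sum_j|g_j(z)|^2\le C(\delta)^2$, $C(\delta)\le C\delta^{-2}\log(1/\delta)$. Setting $b_j:=\sigma_j+a_1ug_j$, each $b_j$ lies in $\Mult(\HH(b))$ because $ug_j\in H^2$ and $a_1ug_j=(a_1u)g_j\in H^\infty$, while $\sum_j\phi_jb_j=\sum_j\sigma_j\phi_j+a_1u\sum_j\phi_jg_j=(1-a_1u)+a_1u=1$, which is conclusion (a).

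Finally I would estimate $\|\M_\B\|$ via Lemma~\ref{le:charact of multipliers}, reducing matters to bounding $\sum_j\|b_j\|_b^2$ and $\sup_z\sum_j|b_j(z)|^2$. Since $\sigma_j\in\P_{N-1}$ and $a_1ug_j\in a_1H^2$, \eqref{eq:norm in h(b)} splits $\|b_j\|_b^2=\|ug_j\|_{H^2}^2+\|\sigma_j\|_{H^2}^2$, so $\sum_j\|b_j\|_b^2=\int_\T|u|^2\sum_j|g_j|^2\,dm+\sum_j\|\sigma_j\|_{H^2}^2\le C(\delta)^2\|u\|_{H^2}^2+C^2\delta^{-2\max m_j}$, and pointwise $\sum_j|b_j(z)|^2\le2\sum_j|\sigma_j(z)|^2+2\|a_1u\|_\infty^2\sum_j|g_j(z)|^2\le C^2\delta^{-2\max m_j}(1+C(\delta)^2)$. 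Plugging in $\|u\|_{H^2}\le C\delta^{-\max m_j}$ and $C(\delta)\le C\delta^{-2}\log(1/\delta)$ and taking square roots gives exactly $\|\M_\B\|\le C\delta^{-\max m_j}\bigl(1+\delta^{-2}\log(1/\delta)\bigr)$, which is (b). I expect the main obstacle to be the gluing step — rigorously verifying that $a_1u$ is genuinely a bounded multiplier and that $a_1ug_j\in\Mult(\HH(b))$ — together with keeping track of how the two constants $\delta^{-\max m_j}$ (from the sharp polynomial estimate) and $\delta^{-2}\log(1/\delta)$ (from Uchiyama) combine multiplicatively in the final bound. (The degenerate case in which $a$ has no zeros on $\T$, so that $\HH(b)=H^2$ and $\Mult(\HH(b))=H^\infty$, is just the $H^\infty$ corona theorem.)
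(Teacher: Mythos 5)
Your proposal is correct and follows essentially the same route as the paper's proof: decompose $\phi_j=a_1\widetilde{\phi_j}+r_j$, use Lemma~\ref{8767ytyuhjBHHGBGHUHU} to pass $\delta^2\le\sum_j|\phi_j(z)|^2$ to the boundary points $\xi_k$, truncate and apply Theorem~\ref{th:main estimate2} with $A=a_1$, invoke the infinite-sequence $H^\infty$ corona theorem with Uchiyama's bound, and define $b_j:=\sigma_j+a_1ug_j$ (which, after identifying notation, is exactly the paper's $b_j=q_j+(1-\sum_k\phi_kq_k)e_j$). The two arguments differ only cosmetically, e.g.\ you verify $a_1u\in H^\infty$ directly from the finite-sum identity $a_1u=1-\sum_{j\le L}\sigma_j\phi_j$, whereas the paper unpacks it via $a_1\widetilde{\phi_j}\in H^\infty$, but the estimates and the final bound are the same.
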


\begin{proof}
	We begin by decomposing each $\phi_j \in \Mult(\HH(b))$  as 
	\begin{equation}\label{eq:decomposition phi_j}
	 \phi_{j} = a_1 \widetilde{\phi_j} + p_j, \quad \widetilde{\phi_j} \in H^2, \quad p_j \in \mathscr{P}_{N - 1},
	\end{equation}
where $ a_1 $ is the polynomial associated with the Pythagorean mate of $ b $ as defined by~\eqref{eq:definition of a}.
	Furthermore, 
	\begin{equation}\label{7665432}
	a_1 \widetilde{\phi_j} \in H^{\infty} \quad \mbox{for all $j \geqslant 1$}.
	\end{equation}
Conditions (i), (ii),  and Lemma~\ref{le:charact of multipliers} imply that
\begin{equation}\label{000oOO00oo}
\delta^2 \leq \sum_{j = 1}^\infty |\phi_j(z)|^2\le 1 \quad \mbox{for all $z\in \D$}.
\end{equation}
Apply Lemma \ref{8767ytyuhjBHHGBGHUHU} to see that for fixed $1 \leq k \leq n$ there is a $c_k > 0$, depending only on $b$, such that for every $\eta > 0$, every $z \in \D$, and every $j \geqslant 1$,
$$|\phi_j(z)|^2 \leq (1 + \eta) |\phi_j(\xi_k)|^2 + \Big(1 + \frac{1}{\eta}\Big) c_k\frac{|z - \xi_k|^2}{1 - |z|^2} \|\phi_j\|_{b}^{2}.$$ By summing over $j$ in the previous inequality, it follows from Lemma \ref{le:charact of multipliers}  that 
\begin{align*}
\delta^2 & \leq (1 + \eta) \sum_{j = 1}^{\infty} |\phi_j(\xi_k)|^2 + \Big(1 + \frac{1}{\eta}\Big) c_k \frac{|z - \xi_k|^2}{1 - |z|^2}  \sum_{j = 1}^{\infty} \|\phi_j\|_{b}^{2}\\
& \leq (1 + \eta) \sum_{j = 1}^{\infty} |\phi_j(\xi_k)|^2 + \Big(1 + \frac{1}{\eta}\Big) c_k \frac{|z - \xi_k|^2}{1 - |z|^2}.
\end{align*}
In the above, note the use of the fact that $\|\M_{\Phi}\| \leq 1$ and so $C_2 \leq 1$. 
Now let $z \to \xi_k$ radially to see that the second term above goes to zero and thus
$$\delta^2 \leq (1 + \eta) \sum_{j = 1}^{\infty} |\phi_j(\xi_k)|^2.$$
Letting $\eta \to 0^{+}$ yields 
$$\delta^2 \leq \sum_{j = 1}^{\infty} |\phi_j(\xi_k)|^2.$$
The estimate in  \eqref{000oOO00oo} and Fatou's lemma yield
$$\sum_{j  = 1}^{\infty} |\phi_j(\xi_k)|^2 \leq 1.$$ Finally, use $p_j(\xi_k) = \phi_j(\xi_k)$ to obtain
$$\delta^2 \leq \sum_{j = 1}^{\infty} |p_j(\xi_k)|^2 \leq 1.$$
	In particular, for every $ 1\le k\le n $ there exists an $\ell_k \in \N$ such that 
	$$\sum_{j = \ell_k + 1}^{\infty} |p_{j}(\xi_k)|^2 \leq \frac{\delta^2}{2}.$$
	
	Define $L = \max\{\ell_{k}: 1 \leq k \leq n\}$. Then for every $1 \leq k \leq n$, we have 
	\begin{align*}
	\sum_{j = 1}^{L} |p_{j}(\xi_k)|^2 & = \sum_{j = 1}^{\infty} |p_j(\xi_k)|^2 - \sum_{j = L + 1}^{\infty} |p_j(\xi_k)|^2\\
	& \geqslant \delta^2 - \sum_{j = \ell_k + 1}^{\infty} |p_j(\xi_k)|^2\\
	& \geqslant \delta^2 - \frac{\delta^2}{2} = \frac{\delta^2}{2} > 0.
	\end{align*}
	
	On the other hand, by assumption (i) and Lemma~\ref{le:charact of multipliers}, 
	\[ \sum_{j=1}^\infty\|\phi_j\|_b^2=C_2^2\le\|\M_{\Phi}\|^2\le 1 ,\] whence the norm \eqref{eq:norm in h(b)} implies that $ \sum_{j=1}^\infty\|p_j\|_{H^2}\le 1 $. Using the constant $ C_1 $ defined in~\eqref{eq:definition of C_1}, we see that 
	\[
\sum_{j=1}^L\|p_j\|^2
	\le C_1^2 \sum_{j=1}^L\|p_j\|_{H^2}^2
	\le C_1^2 \sum_{j=1}^\infty\|p_j\|_{H^2}^2\le C_1^2.
	\]

	 From now on, all constants will be denoted by C and may change from line to line.
	
	Now apply Theorem~\ref{th:main estimate2} with $ K=N - 1 $, $ A=a_1 $, and $ B_j=p_j $ for $1\leq j\leq L$, to produce a  $ q\in \P_{N-2} $ and $ q_1, \dots, q_L\in \P_{N-1} $ such that 
	\begin{equation}\label{eq:properties of q, q1, qL}
	\begin{split}
	&qa_1+q_1p_1+\dots+q_Lp_L \equiv 1\quad\text{and}\\
	&\Big(\|q\|_{H^2}^2 + 
	\sum_{j=1}^L\| q_j \|_{H^2}^2\Big)^{\frac{1}{2}} \le \frac{C}{\delta^{\max m_i}}
	\end{split}
	\end{equation}
	for some $C > 0$, depending only on $ a_1 $ and hence depending only on $b$. (Since all norms on $ \P_{N-1} $ are equivalent, we may replace the norm used in~\eqref{eq:estimate2} by the $H^2$-norm). We set $ q_k\equiv 0 $ for all  $ k\geqslant L+1. $
	
	By Tolokonnikov's theorem \cite{MR629839} (mentioned in the introduction), there is a universal $C > 0 $ and a sequence  $(e_j)_{j \geqslant 1}$ in $H^\infty $ such that 
	\begin{equation}\label{eq:tolokonikov}
		\begin{split}
			&\sum_{j=1}^\infty \phi_j(z) e_j(z)= 1 \text{ for all }z\in\D  , \\
			&\Big(\sup_{z\in \D}\sum_{j=1}^\infty|e_j(z)|^2\Big)^{\frac{1}{2}} \le C \frac{1}{\delta^2} \log \frac{1}{\delta}
		\end{split}
	\end{equation}
	For each $j \geqslant 1$ define 
	\begin{equation}\label{eq:definition of b_j}
			b_j :=q_j+\Big(1-\sum_{k=1}^{L}\phi_k q_k\Big)e_j.
	\end{equation}
We will now show that $(b_j)_{j \geqslant 1} $ is the required sequence satisfying conditions (a) and (b).

 First we check that $b_j \in \Mult(\HH(b))$ for all $j \geqslant 1$. 
Using~\eqref{eq:decomposition phi_j} and~\eqref{eq:properties of q, q1, qL},  one obtains
\[
\begin{split}
		\Big(1 - \sum_{k = 1}^{L} \phi_k q_k\Big) e_j&=\Big(1 - \sum_{k = 1}^{L} q_k (a_1 \widetilde{\phi_k} + p_k)\Big) e_j\\
	 &= \Big(1 - \sum_{k = 1}^{L} q_k p_k - a_1 \sum_{k = 1}^{L} q_k \widetilde{\phi_k}\Big) e_j\\
	 	& = a_1 \Big ( q  -     \sum_{k = 1}^{L} q_k \widetilde{\phi_k}\Big)e_j.
\end{split}
\]
Therefore~\eqref{eq:definition of b_j} can be written as
\begin{equation}\label{00888666Y6YY6}
	b_j=a_1 \Big ( q  -     \sum_{k = 1}^{L} q_k \widetilde{\phi_k}\Big)e_j+q_j.
\end{equation}
Since $ q_j\in\P_{N-1} $ and $q  -     \sum_{k = 1}^{L} q_k \widetilde{\phi_k} \in H^2$, this is precisely the decomposition of $ b_j $ from \eqref{uUUiipPPS}. Therefore, $b_j \in \HH(b)$ and it follows from \eqref{7665432} that $b_j \in H^{\infty}$. Thus, from \eqref{mukltiewrlierrrrrs}, $b_j \in \HH(b) \cap H^{\infty} = \Mult(\HH(b))$.

	Second, we observe that 
	\begin{align*}
		\sum_{j = 1}^{\infty} b_j \phi_j & = \sum_{j = 1}^{\infty} \Big(q_j + \big(1 - \sum_{k = 1}^{L} \phi_k q_k\big) e_j\Big) \phi_j\\
		& = \sum_{j = 1}^{\infty} \phi_j q_j + \Big(1 - \sum_{k = 1}^{L} \phi_k q_k\Big) \sum_{j = 1}^{\infty} e_j \phi_j\\
		& = \sum_{j = 1}^{\infty} \phi_j q_j + \Big(1 - \sum_{k = 1}^{L} \phi_k q_k\Big) \cdot 1 && \mbox{(by \eqref{eq:tolokonikov})}\\
		& = \sum_{j = 1}^{L} \phi_j q_j + \Big(1 - \sum_{k = 1}^{L} \phi_k q_k\Big)\\ & = 1.
	\end{align*}
	Thus (a) is proved. 

In order to prove (b) of Theorem~\ref{th:corona}, we  need to show that $ \M_\B $ satisfies inequalities (a) and (b) in Lemma~\ref{le:charact of multipliers}.  Apply \eqref{eq:norm in h(b)}  and \eqref{00888666Y6YY6} to obtain
	\begin{equation}\label{eq:intermediate}
			\sum_{j=1}^\infty\|b_j\|_b^2 
		=	\sum_{j=1}^\infty\|q_j\|_{H^2}^2
		+\sum_{j=1}^\infty \Big\|\Big ( q  -     \sum_{k = 1}^{L} q_k \widetilde{\phi_k}\Big)e_j  \Big\|_{H^2}^2.
	\end{equation}
	By \eqref{eq:properties of q, q1, qL}, the first term on the right hand side of the above is bounded by $ {\displaystyle \frac{C}{\delta^{2\max m_j}} }$. To bound  the second term, we have
		\begin{align*}
& \sum_{j=1}^\infty \Big\|\Big ( q  -     \sum_{k = 1}^{L} q_k \widetilde{\phi_k}\Big)e_j  \Big\|_{H^2}^2\\
	& =	\int_\T \sum_{j=1}^\infty \big| q(\zeta)-\sum_{k=1}^L q_k(\zeta)\widetilde{\phi_k}(\zeta)  \big|^2
		|e_j(\zeta)|^2\, dm(\zeta)\\
		&=\int_\T  \big| q(\zeta)-\sum_{k=1}^L q_k(\zeta)\widetilde{\phi_k}(\zeta)\big|^2 \big(\sum_{j=1}^\infty 
		|e_j(\zeta)|^2\big)\, dm(\zeta)\\
		&\le \Big(\frac{C}{\delta^2}\log\frac{1}{\delta}\Big)^2
			\int_\T  \big| q(\zeta)-\sum_{k=1}^L q_k(\zeta)\widetilde{\phi_k}(\zeta)  \big|^2
	 \, dm(\zeta)\qquad\text{by~\eqref{eq:tolokonikov}}\\
	 &=\Big(\frac{C}{\delta^2}\log\frac{1}{\delta}\Big)^2\Big\| q-\sum_{k=1}^L q_k\widetilde{\phi_k}\Big\|^{2}_{H^2}\\
	 & \le 2 \Big(\frac{C}{\delta^2}\log\frac{1}{\delta}\Big)^2
	 \Big(\|q\|^2_{H^2}+\Big\| \sum_{k=1}^L q_k\widetilde{\phi_k}\Big\|_{H^2}^{2}
	 \Big).
	\end{align*}
Again~\eqref{eq:properties of q, q1, qL} yields  
	${\displaystyle \|q\|^2_{H^2}\le \frac{C}{\delta^{2\max m_j}}}$.
	On the other hand, \eqref{eq:norm in h(b)} implies that
	$\|\widetilde{\phi_k}\|_{H^2}\le\|\phi_k\|_b $, which, together with the Cauchy--Schwarz inequality, yield
	\begin{align*}
			\Big\| \sum_{k=1}^L q_k\widetilde{\phi_k}\Big\|_{H^2}^2
			&\le\Big( \sum_{k=1}^L\| q_k\widetilde{\phi_k}\|_{H^2}\Big)^2 \le\Big( \sum_{k=1}^L \|q_k\|_\infty \|\phi_k\|_b\Big)^2\\
			&\le C \Big( \sum_{k=1}^L \|q_k\|_{H^2} \|\phi_k\|_b\Big)^2 \\&\le C \Big( \sum_{k=1}^L\|q_k\|_{H^2}^2 \Big)
			\Big( \sum_{k=1}^L \|\phi_k\|_b^2 \Big).
\end{align*}
	Using \eqref{eq:properties of q, q1, qL} once more, we see that the first factor in the last formula is bounded above by ${\displaystyle \frac{C}{\delta^{2\max m_j}}}$, while,   by condition (i) in the statement of the theorem and Lemma~\ref{le:charact of multipliers}, the second factor is bounded above by $1$. Consequently,
	\[
	\begin{split}
			\sum_{j=1}^\infty \Big\|\Big ( q  -     \sum_{k = 1}^{L} q_k \widetilde{\phi_k}\Big)e_j  \Big\|_{H^2}^2
&		\le \frac{C}{\delta^{2\max m_j}}\Big(\frac{1}{\delta^2} \log\frac{1}{\delta}\Big)^2.
	\end{split}
	\]
	Returning to~\eqref{eq:intermediate}, it follows that 
	\begin{equation}\label{eq:corona first estimate}
		\sum_{j=1}^\infty\|b_j\|_b^2 \le
	\frac{C}{\delta^{2\max m_j}}\Big(
	1+\frac{1}{\delta^2}\log\frac{1}{\delta}
	\Big)^2.
	\end{equation}
	proving that   the inequality (a) in Lemma~\ref{le:charact of multipliers} is satisfied.
	
	In order to prove (b), fix   $z\in \D $. From the definition of $ b_j $  from \eqref{eq:definition of b_j}, we see that 
	\[
	|b_j(z)|^2\le 2 \Big(
	|q_j(z)|^2+\big|1-\sum_{k=1}^L\phi_k(z)q_k(z)\big|^2 |e_j(z)|^2
	\Big),
	\]
	whence
	\begin{equation}\label{eq:computing b_j}
		\sum_{j=1}^\infty |b_j(z)|^2
		\le
			2\sum_{j=1}^\infty |q_j(z)|^2
			+ 2\big|1-\sum_{k=1}^L\phi_k(z)q_k(z)\big|^2
			\Big(	\sum_{j=1}^\infty |e_j(z)|^2\Big).
	\end{equation}
	From~\eqref{eq:tolokonikov} we obtain the estimate 
	\[
	\sum_{j=1}^\infty|e_j(z)|^2\le \Big(\frac{C}{\delta^2}\log \frac{1}{\delta}\Big)^2,
	\]
	while from~\eqref{eq:properties of q, q1, qL} we have the estimate 
	\[
		\sum_{j=1}^\infty|q_j(z)|^2
		=	\sum_{j=1}^L|q_j(z)|^2 \le 
		\sum_{j=1}^L\|q_j\|_\infty^2
		\le C
			\sum_{j=1}^L\|q_j\|_{H^2}^2
			\le \frac{C}{\delta^{2\max m_j}}
	\]
	(note that $ q_j \equiv0 $ for all $ j>L $).
	Finally, using condition (b) in Lemma~\ref{le:charact of multipliers} (valid for $ \M_{\Phi} $ by assumption) as well as~\eqref{eq:properties of q, q1, qL}, we see that 
	\[
	\begin{split}
\big|1-\sum_{k=1}^L\phi_k(z)q_k(z)\big|&
\le 1+\sum_{k=1}^L|\phi_k(z)q_k(z)|\\
&\le 1+ \Big(\sum_{k=1}^L|\phi_k(z)|^2   \Big)^{\frac{1}{2}}
		\Big(\sum_{k=1}^L|q_k(z)|^2   \Big)^{\frac{1}{2}}
		\\&\le 1+ \Big(\sum_{k=1}^L|\phi_k(z)|^2   \Big)^{\frac{1}{2}}
		\Big(\sum_{k=1}^L\|q_k\|_\infty^2   \Big)^{\frac{1}{2}}\\
		&\le 1+ C\Big(\sum_{k=1}^L|\phi_k(z)|^2   \Big)^{\frac{1}{2}}
		\Big(\sum_{k=1}^L\|q_k\|_{H^2}^2   \Big)^{\frac{1}{2}}\\
		&\le 1+\frac{C}{\delta^{\max m_j}}.
	\end{split}
	\]
	Gathering up all the last estimates and plugging them into~\eqref{eq:computing b_j} yields
 	\begin{equation}\label{eq:corona second estimate}
 		\sum_{j=1}^\infty |b_j(z)|^2
 	\le   \frac{C}{\delta^{2\max m_j}} \Big(1+ \frac{1}{\delta^2}\log\frac{1}{\delta} \Big)^2.
 	\end{equation}
 
 Together,~\eqref{eq:corona first estimate} and~\eqref{eq:corona second estimate} show that $ \M_\B $ satisfies conditions (a) and (b) in Lemma~\ref{le:charact of multipliers}. It is therefore a bounded operator whose norm satisfies
	\begin{equation}\label{maxxxxx}
	\|\M_\B\|\le \frac{C}{\delta^{\max m_j}} \Big(1+ \frac{1}{\delta^2}\log\frac{1}{\delta} \Big),
	\end{equation}
	for some constant $ C>0 $,
	which ends the proof of the theorem.
\end{proof}

If $\|b\|_{\infty} < 1$, it is known that $\HH(b) = H^2$, with an equivalent norm. Furthermore, in this case the Pythagorean mate $a$ will have no zeros and so the exponent on $\delta$ in \eqref{maxxxxx} will be $\max m_j= 0$. This corresponds to the estimate 
$$\|\M_\B\|  \leq C (1 + \frac{1}{\delta^2} \log \frac{1}{\delta})$$
which is the Uchiyama estimate from \eqref{uchi}. Of course, Uchiyama's result was used in our proof.

\section{Final remarks}

As noted in the introduction, Theorem~\ref{th:corona} is related to some of the results in~\cite{Luothesis, MR4363747}. Here is how one makes the connection. If $ \mu $ is a finite positive Borel measure on $ \T $ and $ P_\mu $ is its Poisson integral
$$P_{\mu}(z) := \int_{\T} \frac{1 - |z|^2}{|z - \xi|^2} d  \mu(\xi), \quad z \in \D,$$
 the {\em harmonically weighted  Dirichlet space} $\DD_\mu$, introduced by Richter \cite{richter}, is the space of  $ f\in H^2 $ satisfying
\[
\int_\D |f'(z)|^2P_\mu(z)dA(z)<\infty,
\]
where $dA$ is area measure.
In~\cite{MR3110499}, Costara and Ransford proved that a de Branges--Rovnyak space $ \HH(b) $, where $ b \in \operatorname{ball}(H^{\infty})$ is rational and not a finite Blaschke product,  coincides (with equivalent norms) with a $ \DD_\mu $  space if and only if the zeros  on $\T$ of the Pythagorean mate $ a$ of $ b$ are simple, that is, with our notation from \eqref{eq:definition of a}, when all the zeros of $ a_1 $ are simple. If this happens, then the support of $ \mu $ is precisely this set of (simple) zeros. 

For this class of  $\mathcal{D}_{\mu}$ spaces, Luo  \cite{Luothesis} proved a corona theorem, including estimates of the norm of the solutions. This turns out to be, when translated in the context of de Branges spaces,  the particular case of Theorem~\ref{th:corona} when $ m_j=1 $ for all $1 \leq j \leq n$. One sees that  our Theorem \ref{th:corona} covers the general case when the roots of $ a$ on $\T$ have arbitrary multiplicities, where $\HH(b)$ no longer coincides   with a $\mathcal{D}(\mu)$ space.  It should also be noted that part of the argument in the proof of Theorem~\ref{th:corona} is similar to an argument from~\cite[Theorem 3.2.4]{Luothesis}.

Finally,  note that in~\cite{MR4363747} Luo obtained a general corona theorem for harmonically weighted Dirichlet spaces $\mathcal{D}_{\mu}$.

\bibliographystyle{plain}

\bibliography{references}

\end{document}